\documentclass[11pt]{amsart}

\usepackage{amsmath,amssymb,amsthm}
\usepackage{enumitem}
\usepackage{tikz}
\usepackage[margin=1.1in]{geometry}
\usepackage[colorlinks=true,linkcolor=blue,citecolor=blue,urlcolor=blue]{hyperref}

\newtheorem{theorem}{Theorem}[section]
\newtheorem{proposition}[theorem]{Proposition}
\newtheorem{lemma}[theorem]{Lemma}
\newtheorem{corollary}[theorem]{Corollary}
\theoremstyle{definition}
\newtheorem{definition}[theorem]{Definition}
\newtheorem{example}[theorem]{Example}
\newtheorem{question}[theorem]{Question}
\theoremstyle{remark}
\newtheorem{remark}[theorem]{Remark}

\makeatletter
\@namedef{subjclassname@1991}{Subject}
\@namedef{subjclassname@2000}{Subject}
\@namedef{subjclassname@2020}{Subject}
\makeatother

\numberwithin{equation}{section}

\newcommand{\R}{\mathbb{R}}
\newcommand{\Z}{\mathbb{Z}}
\newcommand{\E}{\mathbb{E}}
\renewcommand{\P}{\mathbb{P}}
\newcommand{\F}{\mathcal{F}}
\newcommand{\Hz}{\mathcal{H}_0}
\newcommand{\ip}[2]{\langle #1,#2\rangle_d}
\newcommand{\nd}[1]{\|#1\|_d}
\DeclareMathOperator{\dist}{dist}
\DeclareMathOperator{\diam}{diam}
\DeclareMathOperator{\Wid}{Wid}

\begin{document}

\title{Dirichlet problem on locally finite graphs}
\author{Mohammad Javaheri}
\address{Department of Mathematics and Statistics, Siena University\\
515 Loudon Road, Loudonville, NY 12211}
\email{mjavaheri@siena.edu}

\subjclass[2020]{Primary 31C20; Secondary 05C50, 05C81, 39A12}
\keywords{Vertex-weighted Laplacian, discrete Dirichlet problem, harmonic functions on graphs, ends, parabolicity, random walks, discrete heat equation}

\begin{abstract}
We study existence and uniqueness of solutions to the vertex-weighted Dirichlet problem on locally finite graphs. Let $B$ be a subset of the vertices of a connected graph $G$. The Dirichlet problem asks for a function whose discrete Laplacian is prescribed on $V\setminus B$ and whose values are prescribed on $B$. Each infinite connected component of $G\setminus B$ is called an end of $G$ relative to $B$. On finite graphs we show that the solution is the limit of an averaging process, with an explicit geometric rate given by the spectrum of the random walk, and is also the limit of the heat flow and the minimizer of an energy functional. On infinite graphs we prove two existence theorems: one with no assumption on vertex degrees, under a finite-width condition and summable data, and one for bounded data on graphs of bounded degree, where the solution is also unique among bounded functions. If there are no ends, the Dirichlet problem has exactly one solution. If there are $l$ ends, the solution set, when nonempty, has dimension at least $l$; each end carries a positive harmonic function vanishing on $B$. Calling an end parabolic when the random walk started in it hits $B$ almost surely, we show that this function can be taken bounded exactly on non-parabolic ends and is necessarily unbounded on parabolic ones, a discrete analogue of a theorem of Li and Tam. Consequently, bounded solutions are unique if and only if every end is parabolic.
\end{abstract}

\maketitle

\section{Introduction}

\subsection*{Classical background}
Let $U\subset\R^n$ be a bounded region with boundary $\partial U$, and let $F\colon U\to\R$ and $h\colon\partial U\to\R$ be given. The classical Dirichlet problem asks for $\Phi\in C^2(U)\cap C^0(\bar U)$ with
\begin{equation}\label{eq:classical}
\begin{cases}
\Delta\Phi(x)=F(x), & x\in U,\\
\Phi(x)=h(x), & x\in\partial U.
\end{cases}
\end{equation}
Under mild conditions on $\partial U$, $F$ and $h$ (for example, if all are smooth) there is a unique solution \cite[Th.~6.13]{GT}. A similar result holds for bounded regions in Riemannian manifolds. On a compact manifold without boundary, the equation $\Delta\Phi=F$ has either no solution or infinitely many.

There is no complete picture on unbounded regions. Nonconstant bounded harmonic functions exist on certain noncompact manifolds \cite{Choi}. If $E$ is an end of a complete manifold, that is, a noncompact component of the complement of a compact set, with compact boundary $\partial E$, then Li and Tam \cite{LiTam} distinguish \emph{parabolic} and \emph{non-parabolic} ends. A non-parabolic end carries a bounded nonconstant harmonic function with prescribed boundary behaviour; on a parabolic end one constructs a nonnegative harmonic function vanishing on $\partial E$ and tending to infinity along a divergent sequence. One of the aims of this paper is to prove the discrete analogue of this dichotomy (Theorems~\ref{thm:ends} and~\ref{thm:parabolic}).

\subsection*{The discrete problem}
Let $G$ be a connected graph with at least two vertices, without loops or multiple edges, and write $V=V(G)$. We write $v\sim w$ if $v$ and $w$ are joined by an edge, $d_v$ for the degree of $v$, and $\F$ for the space of real functions on $V$. Given $B\subseteq V$, $f\in\F$ and $\varphi_B\colon B\to\R$, the \emph{discrete Dirichlet problem} is to find $\varphi\in\F$ with
\begin{equation}\label{eq:dirichlet}
\begin{cases}
\Delta\varphi(v)=f(v), & v\in V\setminus B,\\
\varphi(v)=\varphi_B(v), & v\in B,
\end{cases}
\end{equation}
where $\Delta$ is the vertex-weighted Laplacian of Section~\ref{sec:setup}. We call $B$ the boundary and $\varphi_B$ the boundary condition. When $f=0$, solutions are called harmonic (on $V\setminus B$).

Harmonic functions on graphs are intimately tied to random walks \cite{DoyleSnell,Lovasz,Woess}. For example, if $X_t$ is simple random walk started at $v$ and $\tau$ is its first visit to $B$, then $\varphi(v)=\E_v[\varphi_B(X_\tau)]$ is harmonic whenever $\tau<\infty$ almost surely. This interpretation runs through the whole paper: the operator that drives our averaging process is the transition matrix of the walk killed on $B$ (Section~\ref{sec:setup}), and on infinite graphs the uniqueness question is governed by whether the walk hits $B$.

\subsection*{Summary of results}
Throughout, after the change of variables of Lemma~\ref{lem:weights}, all weights are equal to one.

\emph{Finite graphs} (Sections~\ref{sec:finite} and~\ref{sec:heat}).
\begin{enumerate}[label=(\roman*)]
\item If $B\neq\emptyset$ the Dirichlet problem has exactly one solution; if $B=\emptyset$ it is solvable if and only if $\sum_v f(v)=0$, and then the solution is unique up to an additive constant (Proposition~\ref{prop:finite}).
\item The iterates of the averaging map $g$ of \eqref{eq:g} converge to the solution at a geometric rate determined by the spectrum of the random walk, except when $B=\emptyset$ and $G$ is bipartite, where the even and odd iterates converge separately to two explicitly identified limits. The lazy averaging map always converges (Theorem~\ref{thm:rate}, Corollary~\ref{cor:lazy}).
\item The solution is the long-time limit of the heat equation with exponential rate given by the first Dirichlet eigenvalue, and the minimizer of an energy functional (Propositions~\ref{prop:heat} and~\ref{prop:energy}).
\end{enumerate}

\emph{Infinite, locally finite graphs} (Sections~\ref{sec:existence} and~\ref{sec:ends}). Let $W=\Wid(G,B)=\sup_v\dist(v,B)$.
\begin{enumerate}[label=(\roman*),resume]
\item If $W<\infty$ and the data satisfy $\sum_{v\notin B}|f(v)|<\infty$ and $\sum_{v\in B}d_v\varphi_B(v)^2<\infty$, there is a solution bounded by $24\Lambda W^2$, with no assumption on degrees (Theorem~\ref{thm:existence}). For $f=0$ and bounded $\varphi_B$ a bounded harmonic extension exists for every nonempty $B$ (Corollary~\ref{cor:harmonic}).
\item If degrees are bounded by $D$, $W<\infty$, and $f,\varphi_B$ are bounded, there is exactly one bounded solution, and it satisfies $\sup|\varphi|\le\sup|\varphi_B|+WD^W\sup|f|$ (Theorem~\ref{thm:rw}).
\item If $G$ has no ends relative to $B\neq\emptyset$, the Dirichlet problem has exactly one solution (Theorem~\ref{thm:noends}).
\item If $G$ has $l\in[0,\infty]$ ends, the space $\Hz$ of solutions of the homogeneous problem has dimension at least $l$; each end carries an element of $\Hz$ that is positive on the end and zero elsewhere (Theorem~\ref{thm:ends}).
\item This element can be taken bounded if and only if the end is non-parabolic; on a parabolic end every such element is unbounded. Bounded solutions of \eqref{eq:dirichlet} are unique if and only if every end is parabolic (Theorem~\ref{thm:parabolic}).
\end{enumerate}
Examples in Section~\ref{sec:ends} show that the lower bound $l$ in (vii) is attained in some cases and far from sharp in others.

\subsection*{Relation to the literature}
The finite-graph results of Sections~\ref{sec:finite}--\ref{sec:heat} are close to classical material on electrical networks and random walks \cite{DoyleSnell,Lovasz,ChungSGT,Grigoryan}. Explicit solutions of Dirichlet and Poisson problems on finite networks via equilibrium measures and Green functions are given by Bendito, Carmona and Encinas \cite{BCE}, and discrete Green functions with boundary are studied by Chung and Yau \cite{ChungYau}; the operator $S$ in \eqref{eq:neumann} is such a Green function. We include complete proofs because they are short and because the quantitative forms are used later.

On infinite graphs, potential theory on networks \cite{Soardi,Woess,LyonsPeres,KLW} studies the Dirichlet problem, the boundary at infinity, and recurrence. Holopainen and Soardi \cite{HolopainenSoardi} relate ends, parabolicity and $p$-harmonic functions on graphs and manifolds. Perkins \cite{Perkins} solves a Dirichlet problem on graphs with finitely many ends in which boundary values are also prescribed at the ends; in the non-parabolic case this restores the uniqueness that, by Theorem~\ref{thm:parabolic}, is lost otherwise. Energy minimization on infinite graphs is treated in \cite{Soardi,KLW} and, recently, in \cite{Tang}. The contributions of the present paper on infinite graphs are the degree-free existence theorem (Theorem~\ref{thm:existence}), the elementary construction of positive harmonic functions on each end (Theorem~\ref{thm:ends}), and the formulation of the uniqueness question in terms of parabolic ends (Theorem~\ref{thm:parabolic}), which makes precise the analogy with \cite{LiTam}.

\begin{remark}[On this version]
This version replaces an earlier one. It corrects two errors: the earlier Lemma~2.4 was false as stated (see Remark~\ref{rem:oldlemma}), and so was the second assertion of the earlier maximum principle (see Remark~\ref{rem:oldmax}). It repairs a gap in the proof of the end-counting result, proves the uniqueness statement for graphs without ends that was announced but not proved, and adds convergence rates, the random-walk existence theorem and the parabolicity results.
\end{remark}

\section{Setup}\label{sec:setup}

\subsection{The vertex-weighted Laplacian}
Let $\alpha_v>0$ be a weight at each vertex and set $\mu_v=\sqrt{\alpha_v}$. Following the vertex-weighted framework of Chung and Langlands \cite{ChungLanglands}, we consider the matrix
\begin{equation}\label{eq:L}
L(u,v)=
\begin{cases}
d_v\alpha_v, & u=v,\\
-\sqrt{\alpha_u\alpha_v}, & u\sim v,\\
0, & \text{otherwise},
\end{cases}
\end{equation}
and the corresponding operator on $\F$:
\begin{equation}\label{eq:Delta}
\Delta\varphi(v)=\sum_{w\in V}L(w,v)\varphi(w)=\mu_v\sum_{w\sim v}\bigl(\mu_v\varphi(v)-\mu_w\varphi(w)\bigr).
\end{equation}
The standard combinatorial Laplacian is the case $\mu\equiv1$. With this sign convention $\Delta$ is positive semidefinite on finitely supported functions; it is the negative of the analyst's Laplacian, which is why the heat equation in Section~\ref{sec:heat} reads $\partial_tX=-\Delta X$.

\begin{lemma}\label{lem:weights}
Let $\psi=\mu\varphi$, $\psi_B=\mu\varphi_B$ and $f'=f/\mu$. Then $\varphi$ solves \eqref{eq:dirichlet} for the weighted Laplacian with data $(f,\varphi_B)$ if and only if $\psi$ solves \eqref{eq:dirichlet} for the standard Laplacian with data $(f',\psi_B)$. The correspondence is a linear bijection between solution sets and preserves the dimension of the space of solutions of the homogeneous problem.
\end{lemma}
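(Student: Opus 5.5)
The plan is to verify the identity pointwise by direct substitution into \eqref{eq:Delta}, and then read off the linear-algebra consequences. Write $\Delta_\mu$ for the weighted operator and $\Delta_1$ for the standard combinatorial Laplacian. For any $\varphi\in\F$, set $\psi=\mu\varphi$, so that $\mu_w\varphi(w)=\psi(w)$ for every $w$. Then \eqref{eq:Delta} gives
\[
\Delta_\mu\varphi(v)=\mu_v\sum_{w\sim v}\bigl(\psi(v)-\psi(w)\bigr)=\mu_v\,\Delta_1\psi(v).
\]
This identity is the whole content of the lemma. Since each vertex has finitely many neighbours in the locally finite setting, every sum is finite and no convergence issue arises.

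Given the identity, I argue the equivalence in both directions at once. Because $\mu_v>0$, the equation $\Delta_\mu\varphi(v)=f(v)$ holds if and only if $\Delta_1\psi(v)=f(v)/\mu_v=f'(v)$, for each $v\in V\setminus B$. On $B$, the condition $\varphi(v)=\varphi_B(v)$ holds if and only if $\psi(v)=\mu_v\varphi_B(v)=\psi_B(v)$, again because $\mu_v\neq0$. Hence $\varphi$ solves the weighted problem with data $(f,\varphi_B)$ exactly when $\psi$ solves the standard problem with data $(f',\psi_B)$.

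For the structural claims, let $M\colon\F\to\F$ be the map $\varphi\mapsto\mu\varphi$. It is linear, and it is bijective with inverse $\psi\mapsto\psi/\mu$, since $\mu$ never vanishes. By the previous paragraph, $M$ maps the solution set of the weighted problem onto the solution set of the standard problem. Taking $f=0$ and $\varphi_B=0$ gives $f'=0$ and $\psi_B=0$, so $M$ restricts to a linear isomorphism between the homogeneous solution spaces $\Hz$ of the two problems. A linear isomorphism preserves dimension, including the case of infinite dimension, and this proves the last assertion.

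I do not expect a genuine obstacle here. The only point needing care is that the correspondence is a bijection of the whole function space $\F$, not merely an injection on solutions. This relies on the positivity $\mu_v=\sqrt{\alpha_v}>0$, and it is also what allows transferring ``exactly one solution'' and ``dimension at least $l$'' statements between the two settings later in the paper.
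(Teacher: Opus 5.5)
Your proposal is correct and follows the paper's own proof: the identity $\Delta_\mu\varphi(v)=\mu_v\,\Delta_1\psi(v)$ read off from \eqref{eq:Delta}, division by $\mu_v>0$, and the observation that multiplication by $\mu$ is a linear bijection of $\F$. Your version just spells out more explicitly that this bijection carries the homogeneous space $\Hz$ isomorphically onto its counterpart, which gives the dimension claim.
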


\begin{proof}
By \eqref{eq:Delta}, $\Delta\varphi(v)=\mu_v\sum_{w\sim v}(\psi(v)-\psi(w))$, so $\Delta\varphi=f$ on $V\setminus B$ if and only if $\Delta_{\rm std}\psi=f/\mu$ there. Multiplication by $\mu$ is a linear bijection of $\F$.
\end{proof}

Every result below is stated for $\mu\equiv1$ and transfers to the weighted case through Lemma~\ref{lem:weights}. In particular, the solvability condition $\sum_vf(v)=0$ of Proposition~\ref{prop:finite} reads $\sum_vf(v)/\mu_v=0$ in the original variables. Ends and their parabolicity (Section~\ref{sec:ends}) are notions about the graph and the normalized problem, so they are unaffected.

\subsection{Operator form and the random walk}
From now on $\mu\equiv1$ and $\Delta\varphi(v)=\sum_{w\sim v}(\varphi(v)-\varphi(w))$. Define $T\in\F$ and a linear map $A\colon\F\to\F$ by
\begin{equation}\label{eq:TA}
T(v)=\begin{cases}d_v^{-1}f(v), & v\notin B,\\ \varphi_B(v), & v\in B,\end{cases}
\qquad
(A\varphi)(v)=\begin{cases}d_v^{-1}\sum_{w\sim v}\varphi(w), & v\notin B,\\ 0, & v\in B.\end{cases}
\end{equation}
Then $\varphi$ solves \eqref{eq:dirichlet} if and only if
\begin{equation}\label{eq:IA}
(I-A)\varphi=T,
\end{equation}
that is, if and only if $\varphi$ is a fixed point of
\begin{equation}\label{eq:g}
g(\varphi)=T+A\varphi.
\end{equation}
When $f=0$ and $B=\emptyset$, $g$ replaces the value at each vertex by the average of the values at its neighbours.

Let $P(v,w)=d_v^{-1}$ for $w\sim v$ and $0$ otherwise be the transition matrix of simple random walk $(X_t)_{t\ge0}$ on $G$. We write $\P_v$ and $\E_v$ for probability and expectation when $X_0=v$, and
\[
\tau=\inf\{t\ge0: X_t\in B\}\in[0,\infty]
\]
for the hitting time of $B$. Then $A$ is the matrix $P$ with the rows indexed by $B$ replaced by zero: the walk killed on $B$. Formally, iterating \eqref{eq:IA} gives the Neumann series
\begin{equation}\label{eq:neumann}
S=\sum_{i=0}^\infty A^i,\qquad \varphi=ST,
\end{equation}
and, by first-step analysis, $ST$ is the function
\begin{equation}\label{eq:rwformula}
\varphi(v)=\E_v\bigl[\varphi_B(X_\tau);\,\tau<\infty\bigr]+\E_v\Bigl[\sum_{0\le t<\tau}\frac{f(X_t)}{d_{X_t}}\Bigr],
\end{equation}
whenever the right-hand side converges absolutely. On finite graphs with $B\neq\emptyset$ this is always the case (Section~\ref{sec:finite}); on infinite graphs it is the basis of Theorem~\ref{thm:rw}.

We use the inner product and norm
\[
\ip{\varphi}{\psi}=\sum_{v\in V}d_v\varphi(v)\psi(v),\qquad \nd{\varphi}=\ip{\varphi}{\varphi}^{1/2},
\]
on finite graphs. Since $\ip{P\varphi}{\psi}=\sum_v\sum_{w\sim v}\varphi(w)\psi(v)$ is symmetric in $\varphi,\psi$, the operator $P$ is self-adjoint for $\ip\cdot\cdot$; this is reversibility of the walk.

\section{Finite graphs: existence, uniqueness and approximation}\label{sec:finite}

Throughout this section $G$ is finite and connected, $|V|=n\ge2$, and $B\subseteq V$ is arbitrary (possibly empty).

\begin{lemma}\label{lem:eigen}
If $\lambda\in\mathbb C$ is an eigenvalue of $A$, then $|\lambda|\le1$, and $|\lambda|=1$ is possible only if $B=\emptyset$.
\end{lemma}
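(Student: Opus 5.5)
Let $\psi\neq0$ be an eigenvector, $A\psi=\lambda\psi$, and pick $v$ with $|\psi(v)|=\max_w|\psi(w)|=:M>0$. The plan is a maximum-modulus argument.

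For the bound, observe that if $v\in B$ then $\lambda\psi(v)=0$, so $\lambda=0$. Otherwise
\[
|\lambda|M=\Bigl|d_v^{-1}\sum_{w\sim v}\psi(w)\Bigr|\le M,
\]
which gives $|\lambda|\le1$.

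Now suppose $|\lambda|=1$; we want to show $B=\emptyset$. Let $U=\{w:|\psi(w)|=M\}$. Any $u\in U$ has $\lambda\psi(u)\neq0$, so $u\notin B$. Equality in the triangle inequality above (applied at $u$) forces every neighbour $w\sim u$ to satisfy $|\psi(w)|=M$. More precisely, all $\psi(w)$ equal the common value $\lambda\psi(u)$, although only the modulus statement is needed. Hence $U$ is closed under taking neighbours. Since $U$ is nonempty and $G$ is connected, $U=V$. But $U\cap B=\emptyset$, so $B=\emptyset$.

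The only delicate point is the equality case of the triangle inequality for complex numbers of modulus at most $M$. If $\bigl|\sum_{w\sim u}\psi(w)\bigr|=d_uM$ with each $|\psi(w)|\le M$, then every term has modulus exactly $M$. This is immediate because
\[
d_uM=\Bigl|\sum_{w\sim u}\psi(w)\Bigr|\le\sum_{w\sim u}|\psi(w)|\le d_uM,
\]
so equality holds throughout. We must work with complex eigenvectors, since $A$ need not be symmetric when $B\neq\emptyset$; the argument above is valid for complex $\psi$. Alternatively, one could note that $A$ is similar, via the $\ip\cdot\cdot$ structure restricted to $V\setminus B$, to a symmetric matrix, but the maximum-modulus route avoids this.
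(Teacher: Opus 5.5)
Your proof is correct and follows essentially the paper's maximum-modulus argument. The paper propagates equality along a path from a maximum point to $B$, while you show the set of maximum-modulus points is closed under neighbours and hence equals $V$, which is the same idea. Your closing aside is not quite right as stated: only the block $A_{II}$ is similar to a symmetric matrix, and $A$ itself can fail to be diagonalizable (e.g.\ for a single edge $b\sim x$ with $B=\{b\}$, $A$ is a nonzero nilpotent matrix). The proof does not use that aside, so this does not affect its correctness.
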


\begin{proof}
Let $\varphi\neq0$ be a complex function with $A\varphi=\lambda\varphi$, and choose $v$ with $|\varphi(v)|=\max_u|\varphi(u)|$. If $\lambda=0$ there is nothing to prove. Otherwise $\varphi=\lambda^{-1}A\varphi$ vanishes on $B$, so $v\notin B$ (as $\varphi\ne0$), and
\[
|\lambda|\,|\varphi(v)|=|(A\varphi)(v)|\le\frac1{d_v}\sum_{w\sim v}|\varphi(w)|\le|\varphi(v)|,
\]
so $|\lambda|\le1$. Suppose $|\lambda|=1$ and $B\ne\emptyset$. Then equality holds throughout, so $|\varphi(w)|=|\varphi(v)|$ for all $w\sim v$, and each such $w$ is again a maximum point. Repeating along a path from $v$ to $B$ we find $w\in B$ with $|\varphi(w)|=|\varphi(v)|>0$, contradicting $\varphi|_B=0$.
\end{proof}

If $B=\emptyset$, the constant function $1$ is an eigenvector of $A=P$ with eigenvalue $1$. We call $f$ \emph{closed} if $\sum_{v\in V}f(v)=0$, and set
\[
\F_0=\Bigl\{\varphi\in\F:\sum_{v\in V}d_v\varphi(v)=0\Bigr\},
\]
the $\ip\cdot\cdot$-orthogonal complement of the constants.

\begin{proposition}\label{prop:finite}
Let $G$ be a finite connected graph with boundary $B$.
\begin{enumerate}[label=(\roman*)]
\item If $B\neq\emptyset$, \eqref{eq:dirichlet} has a unique solution.
\item If $B=\emptyset$ and $f$ is closed, then for each $c\in\R$ there is a unique solution $\varphi$ with $\sum_v d_v\varphi(v)=c$.
\item If $B=\emptyset$ and $f$ is not closed, there is no solution.
\end{enumerate}
\end{proposition}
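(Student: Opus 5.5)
The plan is to work throughout with the operator form \eqref{eq:IA}, $(I-A)\varphi=T$, and to use Lemma~\ref{lem:eigen} for part (i) and self-adjointness of $P$ for parts (ii) and (iii).

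\emph{Part (i).} If $B\neq\emptyset$, Lemma~\ref{lem:eigen} says every eigenvalue of $A$ has modulus strictly less than $1$. In particular $1$ is not an eigenvalue, so $I-A$ is invertible on the finite-dimensional space $\F$. Hence \eqref{eq:IA} has exactly one solution, namely $\varphi=(I-A)^{-1}T$. Since the spectral radius of $A$ is less than $1$, the Neumann series \eqref{eq:neumann} converges and equals this inverse. I will record this fact because it is needed later, though it is not required for the proposition itself.

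\emph{Parts (ii) and (iii).} When $B=\emptyset$ we have $A=P$ and $T=f/d$. The equation becomes $(I-P)\varphi=f/d$, which is equivalent to $\Delta\varphi=f$.

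\emph{Necessity of closedness.} Suppose a solution exists. Summing $\Delta\varphi(v)=\sum_{w\sim v}(\varphi(v)-\varphi(w))$ over all $v$, each edge contributes twice with opposite signs, so the total is $0$. Hence $\sum_v f(v)=0$. This proves (iii).

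\emph{Sufficiency.} The operator $I-P$ is self-adjoint for $\ip\cdot\cdot$. Its kernel consists of the functions with $P\varphi=\varphi$. By the maximum-point argument in the proof of Lemma~\ref{lem:eigen}, applied to a maximum and to a minimum, such a $\varphi$ is constant on the connected graph $G$. So $\ker(I-P)$ is the constants.

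Self-adjointness then gives that the range of $I-P$ is the $\ip\cdot\cdot$-orthogonal complement of the kernel, which is $\F_0$. The right-hand side $f/d$ lies in $\F_0$ exactly when $\sum_v d_v\,(f(v)/d_v)=\sum_v f(v)=0$, that is, exactly when $f$ is closed. So for closed $f$ a solution $\varphi_0$ exists, and the full solution set is $\varphi_0+\R\cdot1$.

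\emph{Normalization.} The map $\varphi\mapsto\sum_v d_v\varphi(v)$ changes by $c'\sum_v d_v\ne0$ when the constant $c'$ is added. Exactly one member of the solution set therefore has $\sum_v d_v\varphi(v)=c$. This proves (ii).

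The only point needing care is identifying the kernel of $I-P$ as the constants when $B=\emptyset$, since Lemma~\ref{lem:eigen} handles only $B\ne\emptyset$. I will repeat its equality-case argument directly: if $P\varphi=\varphi$ with $\varphi$ real, then at a maximum vertex $v$ the value $\varphi(v)$ is the average of its neighbours' values, so all neighbours attain the maximum. By connectivity the maximum spreads to the whole graph, and $\varphi$ is constant.
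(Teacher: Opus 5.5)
Your proof is correct and follows the paper's: parts (i) and (iii) use the same arguments, namely invertibility of $I-A$ from Lemma~\ref{lem:eigen} and summing the equations over $V$. For (ii), the paper notes that $A$ maps $\F_0$ into itself and that $I-A$ is injective, hence invertible, on $\F_0$, whereas you use self-adjointness of $I-P$ to get $\operatorname{range}(I-P)=\ker(I-P)^{\perp}=\F_0$; the two are interchangeable finite-dimensional arguments, and your range identification also yields (iii) directly.
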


\begin{proof}
(i) By Lemma~\ref{lem:eigen}, $1$ is not an eigenvalue of $A$, so $I-A$ is invertible; use \eqref{eq:IA}.

(iii) If $B=\emptyset$ and $\varphi$ is a solution, then $d_v\varphi(v)=f(v)+\sum_{w\sim v}\varphi(w)$, and summing over $v$ gives $\sum_vd_v\varphi(v)=\sum_vf(v)+\sum_wd_w\varphi(w)$, so $f$ is closed.

(ii) Since $\sum_vd_v(A\varphi)(v)=\sum_v\sum_{w\sim v}\varphi(w)=\sum_wd_w\varphi(w)$, the map $A$ preserves $\F_0$. If $\varphi\in\F_0$ and $A\varphi=\varphi$, the maximum argument of Lemma~\ref{lem:eigen} shows that $\varphi$ is constant, hence $\varphi=0$. So $I-A$ is invertible on $\F_0$. Since $f$ is closed, $T=f/d\in\F_0$, and there is $\varphi_0\in\F_0$ with $(I-A)\varphi_0=T$. Then $\varphi=\varphi_0+c/\sum_vd_v$ is a solution with $\sum_vd_v\varphi(v)=c$. Two such solutions differ by an element of $\F_0$ fixed by $A$, which is $0$.
\end{proof}

\subsection{Convergence of the averaging process, with rates}

By Lemma~\ref{lem:eigen}, when $B\ne\emptyset$ the series \eqref{eq:neumann} converges and $g^{(n)}(\varphi)=\bigl(\sum_{i=0}^{n-1}A^i\bigr)T+A^n\varphi$ converges to the solution for every starting point $\varphi$. The following theorem makes this quantitative and treats $B=\emptyset$.

Since $P$ is self-adjoint for $\ip\cdot\cdot$, its eigenvalues are real:
\[
1=\lambda_1>\lambda_2\ge\dots\ge\lambda_n\ge-1.
\]
The eigenvalue $1$ is simple because $G$ is connected, and $\lambda_n=-1$ if and only if $G$ is bipartite \cite{Lovasz,ChungSGT}. Let $I=V\setminus B$ and, for $B\ne\emptyset$, let $A_{II}$ be the restriction of $A$ to functions supported on $I$, i.e.\ the matrix $(P(v,w))_{v,w\in I}$. It is self-adjoint for $\ip\cdot\cdot$ restricted to $I$. Let $\rho_B$ be its spectral radius. If $A_{II}\psi=\lambda\psi$ with $\lambda\ne0$, extending $\psi$ by zero gives an eigenvector of $A$ with the same eigenvalue; hence $\rho_B<1$ by Lemma~\ref{lem:eigen}.

\begin{theorem}\label{thm:rate}
Let $G$ be a finite connected graph, $\varphi\in\F$, and $g$ as in \eqref{eq:g}.
\begin{enumerate}[label=(\roman*)]
\item If $B\neq\emptyset$ and $y$ is the solution of \eqref{eq:dirichlet}, then for $n\ge1$
\[
\nd{g^{(n)}(\varphi)-y}\le\rho_B^{\,n-1}\,\nd{g(\varphi)-y}.
\]
\item If $B=\emptyset$, $f$ is closed and $G$ is not bipartite, let $y$ be the solution with $\varphi-y\in\F_0$. Then, with $\rho=\max(|\lambda_2|,|\lambda_n|)<1$,
\[
\nd{g^{(n)}(\varphi)-y}\le\rho^{\,n}\,\nd{\varphi-y}.
\]
\item If $B=\emptyset$, $f$ is closed and $G$ is bipartite with parts $P_0,Q_0$, let $y$ be as in (ii), let $\chi=1$ on $P_0$ and $\chi=-1$ on $Q_0$, and set
\[
c=\frac{\ip{\varphi-y}{\chi}}{\ip{\chi}{\chi}},\qquad z=y-c\chi.
\]
Then $g^{(2)}(z)=z$, $g(z)=y+c\chi$, $z\in\varphi+\F_0$, and with $\rho'=\max\{|\lambda|:\lambda\ne\pm1\}$ (and $\rho'=0$ if there is no such eigenvalue),
\[
\nd{g^{(2m+1)}(\varphi)-z}\le\rho'^{\,2m+1}\nd{\varphi-y},\qquad \nd{g^{(2m)}(\varphi)-g(z)}\le\rho'^{\,2m}\nd{\varphi-y}.
\]
In particular the odd iterates converge to $z$ and the even iterates to $g(z)$; the sequence converges if and only if $c=0$.
\end{enumerate}
\end{theorem}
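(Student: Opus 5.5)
The plan is to reduce everything to the error recursion. Since $g(\psi)-g(\psi')=A(\psi-\psi')$ and $g(y)=y$ for any solution $y$, we have
\[
g^{(n)}(\varphi)-y=A^{n-1}\bigl(g(\varphi)-y\bigr)=A^n(\varphi-y),
\]
so each part becomes a norm estimate for powers of $A$ on a suitable subspace. The only tool needed is the self-adjointness of $P$ for $\ip\cdot\cdot$ together with the spectral theorem: on an invariant subspace spanned by eigenvectors with $|\lambda|\le r$, the operator norm of $A^k$ is at most $r^k$.

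\textbf{Part (i).} The key observation is that $A\psi$ vanishes on $B$ for every $\psi$. Hence $e_1:=g(\varphi)-y=A(\varphi-y)$ is supported on $I$, and $A$ maps functions supported on $I$ to functions supported on $I$, acting there as $A_{II}$. Because $A_{II}$ is self-adjoint for the restricted inner product (the weights $d_v$, $v\in I$), its norm equals $\rho_B$. Therefore $\nd{A^{n-1}e_1}\le\rho_B^{n-1}\nd{e_1}$. The point to be careful about is that the first step is not contractive in general: $\varphi-y$ need not vanish on $B$, and this is exactly why the bound is stated relative to $g(\varphi)-y$.

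\textbf{Part (ii).} Here $A=P$, and $e_0=\varphi-y\in\F_0$, the orthogonal complement of the constants. $P$ preserves $\F_0$, as shown in the proof of Proposition~\ref{prop:finite}, and $\F_0$ is spanned by the eigenvectors belonging to $\lambda_2,\dots,\lambda_n$. Since $G$ is not bipartite, $\lambda_n>-1$, so $\rho<1$, and $\nd{P^ne_0}\le\rho^n\nd{e_0}$. Existence of a $y$ with $\varphi-y\in\F_0$ comes from Proposition~\ref{prop:finite}(ii), taking $c=\sum_vd_v\varphi(v)$.

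\textbf{Part (iii).} First check that $P\chi=-\chi$, since every neighbour of a vertex lies in the opposite part. Consequently
\[
g(y\pm c\chi)=y\mp c\chi,
\]
which gives $g(z)=y+c\chi$ and $g^{(2)}(z)=z$. Next, $\chi\in\F_0$ because $\sum_{P_0}d_v=\sum_{Q_0}d_v=|E|$, so $z-\varphi=(y-\varphi)-c\chi\in\F_0$.

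Decompose $\varphi-y=c\chi+r$, where $r\perp\chi$ and $r\in\F_0$, so that $r$ is orthogonal to the eigenspaces of both $1$ and $-1$. Then
\[
g^{(k)}(\varphi)=y+(-1)^kc\chi+P^kr,
\]
and $\nd{P^kr}\le\rho'^{\,k}\nd r\le\rho'^{\,k}\nd{\varphi-y}$. For odd $k$ this gives $y-c\chi+P^kr=z+P^kr$; for even $k$ it gives $g(z)+P^kr$.

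The step that needs care is the bookkeeping of which limit belongs to which parity, together with the convention $\rho'^{\,0}=1$ at $m=0$ when $\rho'=0$. Convergence of the whole sequence holds if and only if $z=g(z)$, i.e.\ $2c\chi=0$, i.e.\ $c=0$.
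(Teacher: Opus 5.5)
Your proposal is correct and follows the paper's proof essentially step for step: the error recursion $g^{(n)}(\varphi)-y=A^{n-1}(g(\varphi)-y)$, the restriction to $A_{II}$ on functions vanishing on $B$ for (i), the eigen-expansion on $\F_0$ for (ii), and the splitting $\varphi-y=c\chi+r$ with $P\chi=-\chi$ for (iii). The one step you use without justification is that the $(-1)$-eigenspace of $P$ is spanned by $\chi$, which is needed for $r\perp\chi$ to give $\nd{P^kr}\le\rho'^{\,k}\nd r$. The paper proves this in one line: if $P\psi=-\psi$ then $P(\chi\psi)=\chi\psi$, so $\chi\psi$ is constant because the eigenvalue $1$ is simple.
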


\begin{proof}
In all cases $g(\varphi)-g(\psi)=A(\varphi-\psi)$, so $g^{(n)}(\varphi)-y=A^n(\varphi-y)$ whenever $g(y)=y$.

(i) Since $g(\varphi)|_B=\varphi_B=y|_B$, the function $e=g(\varphi)-y$ vanishes on $B$. The map $A$ preserves the space of functions vanishing on $B$ and acts there as $A_{II}$. Hence $g^{(n)}(\varphi)-y=A_{II}^{\,n-1}e$, and $\|A_{II}^{\,n-1}\|\le\rho_B^{\,n-1}$ because $A_{II}$ is self-adjoint.

(ii) Here $A=P$, and $s=\varphi-y\in\F_0$ is orthogonal to the eigenvector $1$. Expanding $s$ in a $\ip\cdot\cdot$-orthonormal eigenbasis gives $\nd{P^ns}\le\rho^n\nd s$.

(iii) Since $G$ is bipartite, $(P\chi)(v)=d_v^{-1}\sum_{w\sim v}\chi(w)=-\chi(v)$. The eigenvalue $-1$ is simple: if $P\psi=-\psi$ then $P(\chi\psi)=\chi\psi$, so $\chi\psi$ is constant. Moreover $\ip{\chi}{1}=\sum_{P_0}d_v-\sum_{Q_0}d_v=|E|-|E|=0$, so $\chi\in\F_0$ and $z\in\varphi+\F_0$. Write $s=\varphi-y=c\chi+r$ with $r\perp\chi$ and $r\perp1$. Then $P^ns=(-1)^nc\chi+P^nr$ and $\nd{P^nr}\le\rho'^{\,n}\nd r\le\rho'^{\,n}\nd s$. Thus $g^{(n)}(\varphi)=y+(-1)^nc\chi+O(\rho'^{\,n})$, which gives the two estimates since $z=y-c\chi$ and $g(z)=T+Ay-cA\chi=y+c\chi$. Finally $g^{(2)}(z)=g(y+c\chi)=y-c\chi=z$.
\end{proof}

Part (iii) recovers, with explicit limits and rate, the alternating behaviour illustrated by the function equal to $a$ on $P_0$ and $b$ on $Q_0$ with $f=0$: there $y\equiv(a+b)/2$ and $c=(a-b)/2$. Quantitative bounds on $1-\lambda_2$ and on $1-\rho_B$ follow from Cheeger-type inequalities \cite{ChungSGT}, which turn Theorem~\ref{thm:rate} into explicit estimates on how many averaging steps are needed for a given accuracy.

The bipartite obstruction disappears for the lazy averaging map.

\begin{corollary}\label{cor:lazy}
Let $B=\emptyset$, $f$ closed, and $g_{1/2}(\varphi)=\tfrac12\bigl(\varphi+g(\varphi)\bigr)$. Then $g_{1/2}$ has the same fixed points as $g$, and for every finite connected graph $G$, bipartite or not,
\[
\nd{g_{1/2}^{(n)}(\varphi)-y}\le\Bigl(\frac{1+\lambda_2}{2}\Bigr)^n\nd{\varphi-y},
\]
where $y$ is the solution with $\varphi-y\in\F_0$.
\end{corollary}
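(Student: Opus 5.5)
The plan is to follow the proof of Theorem~\ref{thm:rate}(ii), replacing $P$ by the lazy operator $Q=\tfrac12(I+P)$. First the fixed points. With $B=\emptyset$ we have $A=P$, and
\[
g_{1/2}(\varphi)=\tfrac12 T+Q\varphi .
\]
Now $g_{1/2}(\varphi)=\varphi$ holds exactly when $\varphi=\tfrac12\varphi+\tfrac12 g(\varphi)$, that is, when $g(\varphi)=\varphi$. So the fixed points coincide, and in particular the solution $y$ of Proposition~\ref{prop:finite}(ii) with $\varphi-y\in\F_0$ is fixed by $g_{1/2}$. Because $g_{1/2}(\varphi)-g_{1/2}(\psi)=Q(\varphi-\psi)$, we get
\[
g_{1/2}^{(n)}(\varphi)-y=Q^n(\varphi-y).
\]

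Next comes the spectral step. The operator $Q$ is self-adjoint for $\ip\cdot\cdot$, since $P$ is, and it shares the eigenvectors of $P$, with eigenvalues $\tfrac12(1+\lambda_i)$. The key point is $\lambda_n\ge-1$, which gives $\tfrac12(1+\lambda_i)\ge0$ for every $i$. For $i\ge2$ we also have $\tfrac12(1+\lambda_i)\le\tfrac12(1+\lambda_2)<1$, because $\lambda_1=1$ is simple. Hence on $\F_0$, which is spanned by the eigenvectors for $i\ge2$, the operator norm of $Q$ is $\max_{i\ge2}\bigl|\tfrac12(1+\lambda_i)\bigr|=\tfrac12(1+\lambda_2)$.

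Finally, $s=\varphi-y$ lies in $\F_0$, and $Q$ preserves $\F_0$ because $P$ does (proof of Proposition~\ref{prop:finite}(ii)). Expanding $s$ in an orthonormal eigenbasis then gives $\nd{Q^ns}\le\bigl(\tfrac{1+\lambda_2}{2}\bigr)^n\nd s$, which is the claimed bound.

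There is no serious obstacle. The only point needing care is checking that the eigenvalue $-1$ of $P$ in the bipartite case becomes $0$ for $Q$ rather than anything of modulus $1$. This is exactly the nonnegativity observation above, and it is why the rate is governed by $\lambda_2$ alone and not by $\max(|\lambda_2|,|\lambda_n|)$.
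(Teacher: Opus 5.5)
Your proposal is correct and follows the paper's proof: fixed points coincide because $g_{1/2}(\varphi)=\varphi$ iff $g(\varphi)=\varphi$. Then the self-adjoint linear part $\tfrac12(I+P)$ has eigenvalues $\tfrac12(1+\lambda_i)\in[0,1]$, so its norm on $\F_0$ is $\tfrac12(1+\lambda_2)$, and the eigenbasis expansion of Theorem~\ref{thm:rate}(ii) gives the stated bound.
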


\begin{proof}
Fixed points coincide since $g_{1/2}(\varphi)=\varphi$ if and only if $g(\varphi)=\varphi$. The linear part $\frac12(I+P)$ is self-adjoint with eigenvalues $\frac12(1+\lambda_i)\in[0,1]$, and on $\F_0$ its spectral radius is $\frac12(1+\lambda_2)<1$. Argue as in Theorem~\ref{thm:rate}(ii).
\end{proof}

\begin{remark}\label{rem:oldlemma}
The earlier version of this paper proved Theorem~\ref{thm:rate}(ii) by a compactness argument based on the claim that, for non-bipartite $G$, $\|As\|_1=\|s\|_1$ with $s\in\F_0$ forces $s=0$, where $\|s\|_1=\sum_vd_v|s(v)|$. This claim is false. Let $G$ be the triangle $z_1z_2z_3$ with two pendant vertices $x\sim z_1$ and $y\sim z_2$, and let $s(x)=1$, $s(y)=-1$, $s=0$ elsewhere. Then $s\in\F_0$, $As=(0,\frac13,-\frac13,0,0)$ in the order $(x,z_1,z_2,z_3,y)$, and $\|As\|_1=2=\|s\|_1$. The flaw is that equality only forces the \emph{nonzero} values of $s$ at the neighbours of each vertex to share a sign. The correct statement is: \emph{if $G$ is non-bipartite with diameter $\delta$, then $\|A^ms\|_1<\|s\|_1$ for every $m\ge4\delta+1$ and every nonzero $s\in\F_0$.} Indeed, a shortest odd cycle has length at most $2\delta+1$, so any two vertices are joined by walks of both parities of length at most $4\delta+1$, hence (padding with back-and-forth steps) by walks of every length $m\ge4\delta+1$. Thus $K(v,w)=d_vA^m(v,w)$ is a positive symmetric matrix with column sums $d_w$, and
\[
\|A^ms\|_1=\sum_v\Bigl|\sum_wK(v,w)s(w)\Bigr|<\sum_w d_w|s(w)|=\|s\|_1,
\]
the inequality being strict because $s$ takes both signs. With $g^{(m)}$ in place of $g$, the original compactness argument then goes through. The spectral proof above is shorter and also gives the rate.
\end{remark}

\subsection{Maximum principle}

For $S\subseteq V$ let $G[S]$ be the induced subgraph, and for a connected component $C$ of $G[V\setminus B]$ let $\partial C=\{b\in B: b\sim c\text{ for some }c\in C\}$.

\begin{proposition}[Maximum principle]\label{prop:max}
Let $G$ be finite and connected, $B\neq\emptyset$, and let $\varphi$ satisfy $\Delta\varphi\le0$ on $V\setminus B$. Then $\max_V\varphi=\max_B\varphi$. If the maximum is attained at $v\notin B$, then $\varphi$ is constant on $C\cup\partial C$, where $C$ is the component of $G[V\setminus B]$ containing $v$. The analogous statements hold for minima of functions with $\Delta\varphi\ge0$.
\end{proposition}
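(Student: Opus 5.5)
The plan is to run the classical local-averaging argument. The condition $\Delta\varphi\le0$ at $v\notin B$ reads $d_v\varphi(v)\le\sum_{w\sim v}\varphi(w)$, that is,
\[
\varphi(v)\le\frac1{d_v}\sum_{w\sim v}\varphi(w).
\]
So at an interior vertex $\varphi$ is at most the average of its neighbours. Since $G$ is finite, $M=\max_V\varphi$ is attained somewhere. If it is attained only on $B$, the first assertion is immediate. So the work is to show that an interior maximum forces the conclusion.

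Let $v\notin B$ with $\varphi(v)=M$. Every neighbour satisfies $\varphi(w)\le M$, and their average is at least $M$, so equality must hold: $\varphi(w)=M$ for every $w\sim v$. Thus the set $\{u\notin B:\varphi(u)=M\}$ propagates to all neighbours. Let $C$ be the component of $G[V\setminus B]$ containing $v$. By induction along paths inside $C$, starting from $v$, every vertex of $C$ is a maximum point. Each $b\in\partial C$ is adjacent to some $c\in C$, and $c$ is an interior maximum, so the same equality argument applied at $c$ gives $\varphi(b)=M$. Hence $\varphi\equiv M$ on $C\cup\partial C$.

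To obtain $\max_V\varphi=\max_B\varphi$ in this case, I need $\partial C\neq\emptyset$. This holds because $G$ is connected and $B\ne\emptyset$. A path in $G$ from $v$ to a vertex of $B$ has a first vertex in $B$, and its predecessor lies in $C$, so that first vertex belongs to $\partial C$. It follows that $M$ is attained on $B$, and therefore $\max_B\varphi=M$.

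The minimum statement follows by applying the above to $-\varphi$, since $\Delta(-\varphi)=-\Delta\varphi\le0$. The argument has no real obstacle. The only point needing care is the nonemptiness of $\partial C$, which is exactly where the hypotheses that $G$ is connected and $B\neq\emptyset$ are used. I would also note that constancy is asserted only on $C\cup\partial C$ and not on all of $V$. Other components of $G[V\setminus B]$ need not be constant, which is presumably the error mentioned in Remark~\ref{rem:oldmax}.
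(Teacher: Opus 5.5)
Your proposal is correct and follows essentially the same approach as the paper: the averaging inequality forces equality at an interior maximum, the value $M$ propagates through $C$ and then to $\partial C$, and $\partial C\neq\emptyset$ follows from connectedness and $B\neq\emptyset$. Your explicit path argument for $\partial C\neq\emptyset$ and the reduction of the minimum case to $-\varphi$ just spell out details the paper leaves implicit.
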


\begin{proof}
Let $M=\max_V\varphi$ be attained at $v\notin B$. Since $\Delta\varphi(v)\le0$, $\varphi(v)\le d_v^{-1}\sum_{w\sim v}\varphi(w)\le M$, so $\varphi(w)=M$ for every $w\sim v$. Propagating through the connected set $C$, $\varphi\equiv M$ on $C$, and every vertex of $\partial C$, being a neighbour of a vertex of $C$, also has value $M$. Since $G$ is connected and $B\ne\emptyset$, $\partial C\ne\emptyset$, so $M$ is attained on $B$.
\end{proof}

\begin{remark}\label{rem:oldmax}
The earlier version asserted that $\varphi$ is constant on $V\setminus B^\circ$, where $B^\circ$ is the set of boundary vertices without interior neighbours. This fails when $G[V\setminus B]$ is disconnected: on the path $b_1-x-b_2-y-b_3$ with $B=\{b_1,b_2,b_3\}$ and boundary values $1,1,0$, the harmonic extension has $\varphi(x)=1=\max\varphi$ and $\varphi(y)=\frac12$, while $B^\circ=\emptyset$.
\end{remark}

\section{Heat equation and energy minimization}\label{sec:heat}

Throughout this section $G$ is finite and connected.

\subsection{Heat equation}
A solution of the heat equation with data $(X_0,f)$ is a function $X\colon[0,\infty)\times V\to\R$ with $X(0,\cdot)=X_0$ and
\begin{equation}\label{eq:heat}
\partial_tX(t,v)=\begin{cases}-\Delta X(t,\cdot)(v)+f(v), & v\notin B,\\ 0, & v\in B.\end{cases}
\end{equation}
This is a linear system of ordinary differential equations with a unique global solution. It is the continuous-time counterpart of the averaging process: $e^{-t\Delta}$ is the semigroup of the continuous-time random walk that jumps from $v$ at rate $d_v$, which has no periodicity. This is why no bipartite obstruction appears below.

Let $L_{II}$ be the restriction of $\Delta$ to functions supported on $I=V\setminus B$, i.e.\ the matrix $(L(v,w))_{v,w\in I}$, and $|\cdot|_2$ the Euclidean norm.

\begin{proposition}\label{prop:heat}
Let $X$ solve \eqref{eq:heat}.
\begin{enumerate}[label=(\roman*)]
\item If $B\ne\emptyset$, let $z$ be the solution of \eqref{eq:dirichlet} with $\varphi_B=X_0|_B$. Then $L_{II}$ is symmetric positive definite, and with $\lambda_1^D>0$ its smallest eigenvalue,
\[
|X(t)-z|_2\le e^{-\lambda_1^Dt}|X_0-z|_2.
\]
\item If $B=\emptyset$ and $f$ is closed, then $\sum_vX(t,v)$ is constant in $t$, and $X(t)\to z_*$, the unique solution of $\Delta z_*=f$ with $\sum_vz_*(v)=\sum_vX_0(v)$, at rate $e^{-a(G)t}$, where $a(G)$ is the smallest nonzero eigenvalue of $\Delta$.
\item If $B=\emptyset$ and $f$ is not closed, let $\bar f=|V|^{-1}\sum_vf(v)$. Then $\sum_vX(t,v)=\sum_vX_0(v)+t|V|\bar f$ is unbounded, and $X(t)-t\bar f$ converges.
\end{enumerate}
\end{proposition}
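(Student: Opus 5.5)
The approach is to write the system \eqref{eq:heat} in terms of the deviation from a stationary solution and then use the spectral theorem for the symmetric matrix governing it. The operator $\Delta$ (standard, $\mu\equiv1$) is the symmetric matrix $L=D-\mathrm{Adj}$, positive semidefinite, since $\langle L\varphi,\varphi\rangle=\sum_{\{v,w\}\in E}(\varphi(v)-\varphi(w))^2$ in the Euclidean inner product.

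For (i), set $e(t)=X(t)-z$. Values on $B$ are frozen, so $e|_B\equiv0$ for all $t$. Since $\Delta z=f$ on $I$, for $v\in I$ we get
\[
\partial_te(t,v)=-\Delta X(t)(v)+f(v)=-\Delta e(t)(v)=-(L_{II}e(t)|_I)(v),
\]
where the last equality uses that $e$ vanishes on $B$. Hence $e(t)|_I=e^{-tL_{II}}e(0)|_I$. The matrix $L_{II}$ is symmetric, as a principal submatrix of $L$. To see it is positive definite, take $\psi$ supported on $I$ with $\langle L_{II}\psi,\psi\rangle=0$. The quadratic form identity makes $\psi$ constant on each component of $G$. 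Since $G$ is connected and $\psi=0$ on $B\ne\emptyset$, this gives $\psi=0$. (Alternatively, $L_{II}=D_I(I-A_{II})$ and $\rho_B<1$.) Diagonalizing $L_{II}$ orthonormally then gives $|e^{-tL_{II}}x|_2\le e^{-\lambda_1^Dt}|x|_2$, which is the claim.

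For (ii), sum \eqref{eq:heat} over $v$. Because $\sum_v\Delta X(v)=0$ (each edge contributes with both signs) and $f$ is closed, $\frac{d}{dt}\sum_vX(t,v)=0$. Proposition~\ref{prop:finite}(ii) provides some solution of $\Delta\varphi=f$, unique up to additive constants, so the constant can be adjusted to get $z_*$ with the prescribed Euclidean sum. Then $e=X-z_*$ satisfies $\partial_te=-Le$ with $\sum_ve(t,v)=0$, so $e$ stays orthogonal to the kernel of $L$, which is the constants by connectivity. Expanding in an orthonormal eigenbasis of $L$ gives $|e(t)|_2\le e^{-a(G)t}|e(0)|_2$.

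For (iii), summing as before gives $\frac{d}{dt}\sum_vX=\sum_vf=|V|\bar f$, and $\bar f\ne0$, so the sum is unbounded. Set $Y(t)=X(t)-t\bar f$. Since $\Delta$ kills constants, $\partial_tY=-\Delta Y+(f-\bar f)$. The function $f-\bar f$ is closed, so (ii) applies to $Y$ with data $(X_0,f-\bar f)$ and $Y$ converges.

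The only point needing care is positive definiteness in (i) and making sure the reduction to $I$ is legitimate, which relies on $e|_B=0$. Everything else is routine linear ODE theory.
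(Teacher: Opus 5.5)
Your proof is correct and follows the paper's argument essentially step for step. You subtract the stationary solution and reduce to $\partial_t e=-L_{II}e$ using $e|_B=0$. You get positive definiteness from the quadratic form; you use the full Dirichlet form $\mathcal E(\psi)$ of the zero extension, whereas the paper splits it into interior-edge and boundary-edge terms. For (ii) and (iii) you use conservation of $\sum_vX$ and the shift $X-t\bar f$, exactly as the paper does, and fixing $z_*$ before subtracting it (where the paper subtracts an arbitrary solution and identifies the limit afterwards by conservation) is only a cosmetic reordering.
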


\begin{proof}
(i) Since $\partial_tX=0$ on $B$, $X(t)|_B=X_0|_B=z|_B$. Set $Y=X-z$; then $Y|_B=0$ and, on $I$, $\partial_tY=-\Delta X+f=-\Delta Y=-L_{II}Y$, because $Y$ vanishes on $B$. The quadratic form of $L_{II}$ is
\[
\sum_{\substack{v\sim w\\ v,w\in I}}(Y(v)-Y(w))^2+\sum_{v\in I}|\{b\in B:b\sim v\}|\,Y(v)^2 ,
\]
which vanishes only if $Y$ is constant on each component of $G[I]$ and zero at vertices adjacent to $B$; every component has such a vertex since $G$ is connected and $B\ne\emptyset$. So $L_{II}$ is positive definite and $Y(t)=e^{-tL_{II}}Y(0)$ decays at rate $\lambda_1^D$.

(ii) $\partial_t\sum_vX=-\sum_v\Delta X(v)+\sum_vf(v)=0$, since $\sum_v\Delta\varphi(v)=0$ for every $\varphi$. Let $z$ be any solution of $\Delta z=f$ (Proposition~\ref{prop:finite}) and $Y=X-z$, so $\partial_tY=-\Delta Y$. As $\Delta$ is symmetric positive semidefinite with kernel the constants, $Y(t)$ converges at rate $e^{-a(G)t}$ to the constant $|V|^{-1}\sum_vY(0,v)$. Hence $X(t)\to z_*$, characterized by conservation of $\sum_vX$.

(iii) $U=X-t\bar f$ solves \eqref{eq:heat} with $f$ replaced by the closed function $f-\bar f$, since $\Delta$ kills constants; apply (ii).
\end{proof}

\subsection{Energy minimization}
Let $S_B=\{\varphi\in\F:\varphi|_B=\varphi_B\}$ (so $S_\emptyset=\F$), let
\[
\mathcal E(\varphi)=\sum_{\{v,w\}:\,v\sim w}(\varphi(v)-\varphi(w))^2=\tfrac12\sum_{v\in V}\sum_{w\sim v}(\varphi(v)-\varphi(w))^2,
\]
and $H(\varphi)=\mathcal E(\varphi)-2\sum_{v\notin B}f(v)\varphi(v)$. Since $\partial\mathcal E/\partial\varphi(v)=2\Delta\varphi(v)$, the critical points of $H$ on $S_B$ are exactly the solutions of \eqref{eq:dirichlet}; as $H$ is convex, they are exactly its minimizers on $S_B$. Let $\delta=\diam G$.

\begin{proposition}\label{prop:energy}
$H$ attains its minimum on $S_B$ if $B\ne\emptyset$, or if $B=\emptyset$ and $f$ is closed. If $B=\emptyset$ and $f$ is not closed, $H$ is unbounded below on $\F$. This gives a second proof of the existence statements in Proposition~\ref{prop:finite}.
\end{proposition}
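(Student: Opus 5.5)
The plan is to prove this directly by a coercivity argument on a finite-dimensional space. Since $H$ is a continuous convex quadratic function on the affine space $S_B$ (or on $\F$), it suffices to show that $H$ is bounded below and coercive in the relevant directions. The paper introduces $\delta=\diam G$ right before the statement, so I will aim for an explicit, diameter-based lower bound on $\mathcal E$ rather than an abstract spectral one.

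\emph{Case $B\neq\emptyset$.} Fix $b\in B$. For any $v$, choose a path $v=u_0\sim u_1\sim\dots\sim u_k=b$ with $k\le\delta$. Cauchy--Schwarz along this path gives
\[
(\varphi(v)-\varphi_B(b))^2\le k\sum_{i}(\varphi(u_i)-\varphi(u_{i+1}))^2\le\delta\,\mathcal E(\varphi),
\]
so that $|\varphi(v)|\le|\varphi_B(b)|+\sqrt{\delta\,\mathcal E(\varphi)}$. Hence
\[
H(\varphi)\ge\mathcal E(\varphi)-2\|f\|_1\bigl(|\varphi_B(b)|+\sqrt{\delta\mathcal E(\varphi)}\bigr).
\]
This is a quadratic in $\sqrt{\mathcal E}$ minus a linear term, so it is bounded below and tends to $+\infty$ as $\mathcal E\to\infty$. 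Consequently a sublevel set $\{H\le H(\varphi_0)\}\cap S_B$ has bounded $\mathcal E$. By the pointwise bound, $\varphi$ is then bounded, so the sublevel set is compact in the finite-dimensional space $S_B$. A continuous function attains its minimum on a compact set.

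\emph{Case $B=\emptyset$, $f$ closed.} Here $H(\varphi+c)=H(\varphi)$, because $\mathcal E$ is invariant under adding constants and $\sum f=0$. We may therefore restrict to $\varphi$ with $\varphi(v_0)=0$ for a fixed vertex $v_0$. The same path estimate with $v_0$ in place of $b$ gives $|\varphi(v)|\le\sqrt{\delta\mathcal E(\varphi)}$, and the argument of the previous case goes through verbatim. This yields a minimizer on the slice, which is a global minimizer on $\F$.

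\emph{Case $B=\emptyset$, $f$ not closed.} Take constant functions $\varphi\equiv c$. Then $H(c)=-2c\sum_v f(v)$, and letting $c\to\pm\infty$ with the appropriate sign makes $H$ unbounded below.

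Finally, combining the first two cases with the observation before the statement (minimizers of the convex $H$ on $S_B$ are exactly the solutions of \eqref{eq:dirichlet}) gives existence in Proposition~\ref{prop:finite}(i) and (ii). For (ii), adding a constant adjusts $\sum_v d_v\varphi(v)$ to any prescribed value $c$.

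The only step needing care is the coercivity, i.e.\ turning a bound on $\mathcal E$ into a pointwise bound via connectivity. The path/Cauchy--Schwarz estimate handles this, so I expect no real obstacle.
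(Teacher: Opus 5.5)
Your proof is correct and follows the paper's argument. The shared route is the diameter/Cauchy--Schwarz estimate $(\varphi(u)-\varphi(w))^2\le\delta\,\mathcal E(\varphi)$, which gives coercivity and hence compactness in finite dimensions; translation invariance handles $B=\emptyset$ with $f$ closed, and constants give unboundedness below when $f$ is not closed. The differences are cosmetic: you use compact sublevel sets instead of a minimizing sequence, and you normalize $\varphi(v_0)=0$ with the crude bound $|\varphi|\le\sqrt{\delta\,\mathcal E(\varphi)}$, whereas the paper normalizes $\min\varphi=0$ and uses closedness of $f$ to center $\varphi$ at $M/2$.
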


\begin{proof}
For any $u,w\in V$, joining them by a path of length $\ell\le\delta$ and using Cauchy--Schwarz,
\begin{equation}\label{eq:cs}
(\varphi(u)-\varphi(w))^2\le\delta\,\mathcal E(\varphi).
\end{equation}

Case $B=\emptyset$, $f$ closed. Since $H(\varphi+c)=H(\varphi)$, we may assume $\min\varphi=0$; let $M=\max\varphi$. By \eqref{eq:cs}, $M^2\le\delta\mathcal E(\varphi)$. As $f$ is closed, $|2\sum_vf(v)\varphi(v)|=|2\sum_vf(v)(\varphi(v)-M/2)|\le M\sum_v|f(v)|$. Hence
\[
H(\varphi)\ge\tfrac12\mathcal E(\varphi)+\tfrac{1}{2\delta}M^2-M\sum_v|f(v)|\ge\tfrac12\mathcal E(\varphi)-\tfrac{\delta}{2}\Bigl(\sum_v|f(v)|\Bigr)^2.
\]
Along a minimizing sequence normalized by $\min\varphi_i=0$, $\mathcal E(\varphi_i)$ is bounded, hence so is $\max\varphi_i$ by \eqref{eq:cs}; a convergent subsequence has a minimizer as its limit.

Case $B\ne\emptyset$. For $\varphi\in S_B$ and $v\in V$, \eqref{eq:cs} with $w\in B$ gives $|\varphi(v)|\le\beta+\sqrt{\delta\mathcal E(\varphi)}$, where $\beta=\max_B|\varphi_B|$. Hence $H(\varphi)\ge\mathcal E(\varphi)-2\sum|f|\,(\beta+\sqrt{\delta\mathcal E(\varphi)})$, which is bounded below and forces $\mathcal E$, hence $\varphi$, to stay bounded along a minimizing sequence; conclude as before.

Finally, if $B=\emptyset$ and $\sum f\ne0$, then $H(c)=-2c\sum_vf(v)\to-\infty$ for suitable $c\to\pm\infty$.
\end{proof}

\begin{remark}
On infinite graphs, minimizing $H$ over functions of finite energy selects the \emph{finite-energy} solution, which in general differs from the bounded solutions of Section~\ref{sec:existence}; see \cite{Soardi,KLW,Tang} for this theory.
\end{remark}

\section{Infinite graphs: existence}\label{sec:existence}

A graph is \emph{locally finite} if every vertex has finite degree. A connected locally finite graph is countable, and balls $\{w:\dist(v,w)\le k\}$ are finite. The \emph{width} of $G$ relative to $B$ is
\[
\Wid(G,B)=\sup_{v\in V}\dist(v,B),
\]
where $\dist(v,B)$ is the length of a shortest path from $v$ to $B$.

\subsection{Existence without degree bounds}

\begin{lemma}\label{lem:apriori}
Let $G$ be finite and connected, $\emptyset\ne B\subsetneq V$, $W=\Wid(G,B)$, and let $\varphi$ solve \eqref{eq:dirichlet}. Put
\begin{equation}\label{eq:Lambda}
\Lambda=\sum_{u\notin B}|f(u)|+\Bigl(\sum_{u\in B}d_u\varphi_B(u)^2\Bigr)^{1/2}.
\end{equation}
Then $\max_V|\varphi|\le6\Lambda W^2$.
\end{lemma}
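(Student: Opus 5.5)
The plan is to use the energy characterization of Section~\ref{sec:heat} rather than the random-walk formula. This is because the energy approach does not see the vertex degrees, except through the boundary term already built into $\Lambda$. By Proposition~\ref{prop:energy} and the discussion before it, the solution $\varphi$ is the minimizer of $H(\psi)=\mathcal E(\psi)-2\sum_{v\notin B}f(v)\psi(v)$ over $S_B$. I compare $H(\varphi)$ with $H(\psi_0)$ for the competitor $\psi_0$ equal to $\varphi_B$ on $B$ and $0$ off $B$.

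For this competitor the $f$-term vanishes. In $\mathcal E(\psi_0)$, an edge $b\sim u$ with $u\notin B$ contributes $\varphi_B(b)^2$, and an edge $b\sim b'$ inside $B$ contributes at most $2\varphi_B(b)^2+2\varphi_B(b')^2$. Charging each contribution to the endpoints, every $b\in B$ is charged at most $2\varphi_B(b)^2$ per incident edge, so
\[
H(\varphi)\le H(\psi_0)=\mathcal E(\psi_0)\le 2\sum_{b\in B}d_b\varphi_B(b)^2\le 2\Lambda^2 .
\]
Writing $M=\max_V|\varphi|$, I get $\mathcal E(\varphi)\le H(\varphi)+2M\sum_{v\notin B}|f(v)|\le 2\Lambda^2+2\Lambda M$.

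Next comes the pointwise bound, which is where $W$ enters in place of the diameter. For $v\notin B$, I pick a path of length $\ell\le W$ from $v$ to some $b\in B$. Cauchy--Schwarz along this path, as in \eqref{eq:cs} but with $\ell\le W$, gives $|\varphi(v)-\varphi(b)|\le\sqrt{W\mathcal E(\varphi)}$. Since $d_b\ge1$, I also have $|\varphi_B(b)|\le(d_b\varphi_B(b)^2)^{1/2}\le\Lambda$. Hence
\[
M\le \Lambda+\sqrt{W(2\Lambda^2+2\Lambda M)} .
\]

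The final step is an elementary inequality. I set $M=x\Lambda$; the case $\Lambda=0$ is trivial, since then $\varphi\equiv0$ by uniqueness. The inequality becomes $x\le 1+\sqrt{2W(1+x)}$, with $W\ge1$ because $B\ne V$. If $x>6W^2$, then $(x-1)^2\ge(6W^2-1)^2\ge 25W^4$. On the other hand, $2W(1+x)$ grows only linearly in $x$, and more precisely $(x-1)^2>2W(1+x)$ for all $x\ge 6W^2$, since $(x-1)^2/(1+x)$ is increasing for $x\ge1$. This is a contradiction, so $M\le 6\Lambda W^2$. The constant is generous: one actually gets $O(W)$. I expect the main point needing care to be the first step, namely bounding the competitor's energy by $\sum_B d_b\varphi_B^2$ without any degree assumption, including edges internal to $B$. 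The remaining steps are routine.
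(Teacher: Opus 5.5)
Your proof is correct, but it reaches the estimate by a different route from the paper's, in both steps.

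\textbf{The energy bound.} The paper does not use the variational characterization. It expands $\sum_{u\notin B}\sum_{w\sim u}(\varphi(u)-\varphi(w))^2$ and substitutes the equation $\sum_{w\sim u}\varphi(w)=d_u\varphi(u)-f(u)$, in effect testing the equation against $\varphi$ itself. This gives the bound $2C\Lambda+\Lambda^2$ with $C=\max_{V\setminus B}|\varphi|$. Your comparison $H(\varphi)\le H(\psi_0)$, with the competitor vanishing off $B$, gives a bound of the same form, $\mathcal E(\varphi)\le 2\Lambda^2+2\Lambda M$. Your charging of the edges inside $B$, the step you flagged, is handled correctly.

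\textbf{The pointwise bound.} This is where the real difference lies. The paper bounds each edge difference along a shortest path separately by $\sqrt{2C\Lambda}+\Lambda$ and adds up at most $W$ of them. It must then absorb the term $W\sqrt{2C\Lambda}$ by AM--GM, and this is exactly where the factor $W^2$ comes from. Your Cauchy--Schwarz along the path costs only $\sqrt{W\,\mathcal E(\varphi)}$. Solving your inequality $x\le 1+\sqrt{2W(1+x)}$ directly as a quadratic gives $x\le 1+W+\sqrt{W^2+4W}\le 2W+3$. Hence $M\le(2W+3)\Lambda$, which is stronger than the stated $6\Lambda W^2$ and would improve the constant in Theorem~\ref{thm:existence} to $O(\Lambda W)$.

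The gain comes from Cauchy--Schwarz, not from the variational principle. The paper's identity combined with Cauchy--Schwarz along the path would give the same linear bound, since every path edge $\{x_i,x_{i+1}\}$ with $x_i\notin B$ appears in its sum.

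\textbf{What each route buys.} The paper's argument is self-contained and uses only the equation. Yours relies on the fact that the solution minimizes the convex functional $H$ on $S_B$, which is immediate on a finite graph and arguably more transparent. Solving the quadratic is also cleaner than your monotonicity argument, which leaves the endpoint check $(6W^2-1)^2>2W(1+6W^2)$ implicit. That check does hold, since $(6W^2-1)^2\ge 25W^4>12W^3+2W$ for $W\ge1$.
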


\begin{proof}
Note that $|\varphi_B(w)|\le(d_w\varphi_B(w)^2)^{1/2}\le\Lambda$ for $w\in B$, since $d_w\ge1$; as $W\ge1$, the bound holds on $B$. Let $C=\max_{V\setminus B}|\varphi|$. For $u\notin B$, $\sum_{w\sim u}\varphi(w)=d_u\varphi(u)-f(u)$, so
\begin{align*}
\sum_{u\notin B}\sum_{w\sim u}(\varphi(u)-\varphi(w))^2
&=\sum_{u\notin B}\Bigl(-d_u\varphi(u)^2+2\varphi(u)f(u)\Bigr)+\sum_{u\notin B}\sum_{w\sim u}\varphi(w)^2.
\end{align*}
The last sum equals $\sum_{w\in V}n_w\varphi(w)^2$ with $n_w=|\{u\notin B:u\sim w\}|\le d_w$, so it is at most $\sum_{w\notin B}d_w\varphi(w)^2+\sum_{w\in B}d_w\varphi_B(w)^2$. Hence the left side is at most $2C\Lambda+\Lambda^2$, and for every edge $\{u,w\}$ with $u\notin B$,
\begin{equation}\label{eq:edge}
|\varphi(u)-\varphi(w)|\le\sqrt{2C\Lambda}+\Lambda.
\end{equation}
Let $|\varphi(v)|=C$ with $v\notin B$, and let $v=x_0,x_1,\dots,x_\ell=w$ be a shortest path from $v$ to $B$, so $\ell\le W$ and $x_0,\dots,x_{\ell-1}\notin B$. Applying \eqref{eq:edge} along it and using $W\sqrt{2C\Lambda}=\sqrt{C\cdot2W^2\Lambda}\le\frac C2+W^2\Lambda$,
\[
C\le\Lambda+W\sqrt{2C\Lambda}+W\Lambda\le\frac C2+\Lambda+W^2\Lambda+W\Lambda\le\frac C2+3W^2\Lambda,
\]
so $C\le6W^2\Lambda$.
\end{proof}

\begin{theorem}\label{thm:existence}
Let $G$ be locally finite and connected, $B\subsetneq V$ with $W=\Wid(G,B)<\infty$, and suppose
\[
\Lambda=\sum_{w\notin B}|f(w)|+\Bigl(\sum_{w\in B}d_w\varphi_B(w)^2\Bigr)^{1/2}<\infty.
\]
Then \eqref{eq:dirichlet} has a solution $\varphi$ with $|\varphi(w)|\le24\Lambda W^2$ for all $w\in V$.
\end{theorem}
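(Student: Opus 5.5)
Exhaust $G$ by finite connected subgraphs, solve a finite Dirichlet problem on each using Proposition~\ref{prop:finite}(i), bound the solutions uniformly with Lemma~\ref{lem:apriori}, and pass to a pointwise limit by a diagonal argument. The factor $24=4\cdot 6$ should come from the widths of the approximating problems being at most $2W$ rather than $W$.

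Fix $v_0\in V$ and, for $k\ge 1$, let $V_k$ be the ball of radius $k$ about $v_0$; it is finite by local finiteness. Let $G_k$ be the induced subgraph on $V_k$, which is connected since balls are. Take as boundary
\[
B_k=(B\cap V_k)\cup\{v\in V_k\setminus B:\ \dist(v_0,v)>k-W\}.
\]
The second set is an artificial outer layer on which we impose the value $0$. On $V_k\setminus B_k$ we prescribe $\Delta_{G_k}\varphi=f$. For a vertex $v$ with $\dist(v_0,v)\le k-W$, all neighbours lie in $V_k$, so $\Delta_{G_k}$ agrees with $\Delta_G$ at $v$.

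Next I check the width of $G_k$ relative to $B_k$. For $v\in V_k\setminus B_k$, a shortest $G$-path to $B$ has length at most $W$ and stays within distance $W$ of $v$. Either it remains in $V_k$, giving $\dist_{G_k}(v,B_k)\le W$, or it first meets the outer layer. In the second case the distance is again at most $W$, since the path reaches radius $>k-W$ before leaving $V_k$. I would simply take $\Wid(G_k,B_k)\le 2W$ as a safe bound; this is presumably the source of the constant $24=6\cdot 4$.

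For the data, set $\Lambda_k=\sum_{V_k\setminus B_k}|f|+\bigl(\sum_{B_k}d^{G_k}_u\varphi_{B_k}(u)^2\bigr)^{1/2}$. Since the outer layer carries value $0$ and $d^{G_k}_u\le d_u$, we get $\Lambda_k\le\Lambda$. If $B_k=\emptyset$ or $B_k=V_k$ the problem is trivial; otherwise Proposition~\ref{prop:finite}(i) gives a unique solution $\varphi_k$. Lemma~\ref{lem:apriori} then yields $|\varphi_k|\le 6\Lambda(2W)^2=24\Lambda W^2$ on $V_k$.

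$V$ is countable and each $\varphi_k$ is uniformly bounded, so a diagonal argument gives a subsequence converging pointwise on $V$ to some $\varphi$. Then:
\begin{itemize}
\item $|\varphi|\le 24\Lambda W^2$ everywhere.
\item $\varphi=\varphi_B$ on $B$, because $\varphi_k=\varphi_B$ on $B\cap V_k$.
\item For fixed $v\notin B$, once $k\ge\dist(v_0,v)+W+1$ the vertex $v$ is interior and all its finitely many neighbours lie in $V_k$. Hence $d_v\varphi_k(v)-\sum_{w\sim v}\varphi_k(w)=f(v)$, and this identity passes to the limit.
\end{itemize}

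The main obstacle is the width and degree bookkeeping. Lemma~\ref{lem:apriori} uses degrees in $G_k$, which only decrease, so that direction is harmless. The real issue is making sure the artificial boundary keeps the width bounded by a fixed multiple of $W$. If the simple layer choice above fails, I would instead make every vertex at distance exactly $k$ a boundary vertex with value $0$, and check directly that interior vertices are within distance $2W$ of $B_k$ inside $G_k$.
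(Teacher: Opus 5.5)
Your proof is correct. Its skeleton is the paper's: exhaust by balls, solve each finite problem by Proposition~\ref{prop:finite}(i), bound uniformly with Lemma~\ref{lem:apriori}, and pass to a diagonal limit. The two proofs differ only in how they control the width of the truncated problems.

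The paper centres the balls at a vertex of $B$ and takes $B_k=B\cap V_k$, with no artificial boundary. A vertex near the outer sphere may then have its nearest boundary vertex outside $V_k$. So the paper backs up $W$ steps along a geodesic towards the centre and only then heads for $B$. This gives $\Wid(G_k,B_k)\le 2W$, which is exactly where $24=6\cdot 4$ comes from.

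Your zero-valued outer layer makes that detour unnecessary. If $v\in V_k\setminus B_k$, then $\dist(v_0,v)\le k-W$, so every shortest $G$-path from $v$ to $B$ stays inside $V_k$. The alternative in which the path leaves $V_k$ therefore never occurs, and $\Wid(G_k,B_k)\le W$. Since the layer contributes nothing to $\Lambda_k$, Lemma~\ref{lem:apriori} actually gives $|\varphi|\le 6\Lambda W^2$, so weakening to $2W$ is not needed. Your guess about the factor $4$ is right for the paper's construction but not for yours.

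A small correction: $B_k=\emptyset$ never happens. For $k<W$ one has $B_k=V_k$, and for $k\ge W$ the ball meets $B$ because $\dist(v_0,B)\le W$. That matters, because the case $B_k=\emptyset$ would not be trivial: a finite problem with empty boundary can be unsolvable.

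What the paper's unmodified truncation buys is reuse. Corollary~\ref{cor:harmonic} repeats the same construction with no width hypothesis and bounds $\varphi_k$ by the maximum principle. There, an artificial layer carrying the value $0$ would spoil the bound $C_1\le\varphi_k\le C_2$ unless $0\in[C_1,C_2]$.
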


\begin{proof}
Since $W<\infty$, $B\neq\emptyset$; fix $v\in B$. Let $V_k=\{u:\dist(v,u)\le k\}$, $G_k=G[V_k]$ and $B_k=B\cap V_k\ni v$. Each $G_k$ is finite and connected, since geodesics from $v$ stay in $V_k$.

\emph{Claim: $\Wid(G_k,B_k)\le2W$, distances taken in $G_k$.} Let $u\in V_k$ and $\ell=\dist(v,u)\le k$, with geodesic $v=u_0,\dots,u_\ell=u$. If $\ell\le W$, the geodesic itself shows $\dist_{G_k}(u,B_k)\le W$. If $\ell>W$, put $m=\ell-W\ge1$ and choose $w\in B$ and a geodesic $\gamma$ from $u_m$ to $w$ of length at most $W$. Every vertex $x$ of $\gamma$ satisfies $\dist(v,x)\le m+W=\ell\le k$, so $\gamma\subset V_k$ and $w\in B_k$. Following the geodesic from $u$ back to $u_m$ ($W$ steps) and then $\gamma$ gives a path in $G_k$ of length at most $2W$.

Consider the Dirichlet problem on $G_k$ with boundary $B_k$, data $\varphi_B|_{B_k}$ and $f|_{V_k\setminus B_k}$, using the Laplacian of $G_k$. By Proposition~\ref{prop:finite} it has a unique solution; extend it by $0$ to get $\varphi_k\in\F$. If $V_k\subseteq B$ then $\varphi_k=\varphi_B$ on $V_k$ and $|\varphi_k|\le\Lambda$. Otherwise Lemma~\ref{lem:apriori} applies; since degrees in $G_k$ do not exceed those in $G$, the quantity \eqref{eq:Lambda} for $G_k$ is at most $\Lambda$, and $|\varphi_k|\le6\Lambda(2W)^2=24\Lambda W^2$.

By a diagonal argument over the countable set $V$, a subsequence $\varphi_{k_i}$ converges pointwise to some $\varphi$ with $|\varphi|\le24\Lambda W^2$. If $u\notin B$ and $k>\dist(v,u)+1$, all neighbours of $u$ lie in $V_k$ and $u$ has the same degree in $G_k$ as in $G$, so $\Delta\varphi_k(u)=f(u)$; this is a finite linear relation and passes to the limit. On $B$, $\varphi_k=\varphi_B$ eventually.
\end{proof}

For harmonic functions the width assumption is unnecessary.

\begin{corollary}\label{cor:harmonic}
Let $G$ be locally finite and connected, $\emptyset\ne B\subseteq V$, and $C_1\le\varphi_B\le C_2$ on $B$. Then there is $\varphi$, harmonic on $V\setminus B$, with $\varphi|_B=\varphi_B$ and $C_1\le\varphi\le C_2$.
\end{corollary}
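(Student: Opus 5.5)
We will repeat the exhaustion argument of Theorem~\ref{thm:existence}, replacing the a priori bound of Lemma~\ref{lem:apriori} by the maximum principle (Proposition~\ref{prop:max}). That principle gives a degree-free and width-free bound, so the width assumption can be dropped.

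Fix $v\in B$ and let $V_k=\{u:\dist(v,u)\le k\}$, $G_k=G[V_k]$ and $B_k=B\cap V_k$. As in the proof of Theorem~\ref{thm:existence}, each $G_k$ is finite and connected and $v\in B_k$, so $B_k\ne\emptyset$. Proposition~\ref{prop:finite}(i) then gives a unique $\psi_k$ on $V_k$ with the following properties:
\begin{itemize}
\item[] $\psi_k$ is harmonic for the Laplacian of $G_k$ on $V_k\setminus B_k$;
\item[] $\psi_k=\varphi_B$ on $B_k$.
\end{itemize}
Since $\Delta\psi_k=0$ there, Proposition~\ref{prop:max}, applied once to maxima and once to minima, yields
\[
\min_{B_k}\varphi_B\le\psi_k\le\max_{B_k}\varphi_B,
\]
and hence $C_1\le\psi_k\le C_2$ on $V_k$. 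Extend $\psi_k$ to $V$ by the constant $C_1$ outside $V_k$; the extended functions satisfy the same bounds everywhere.

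Because $V$ is countable and the $\psi_k$ are uniformly bounded, a diagonal argument produces a subsequence converging pointwise to some $\varphi$ with $C_1\le\varphi\le C_2$. Take $u\notin B$ and $k>\dist(v,u)+1$. Then all neighbours of $u$ lie in $V_k$ and the degree of $u$ in $G_k$ equals its degree in $G$, so
\[
d_u\psi_k(u)=\sum_{w\sim u}\psi_k(w).
\]
This is a finite linear relation, so it passes to the limit and $\varphi$ is harmonic at $u$. On $B$, $\psi_k=\varphi_B$ eventually, so $\varphi|_B=\varphi_B$.

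The only step needing care is the one where the width hypothesis was used before, namely the uniform bound on the approximations. Here it is supplied by the maximum principle on each finite $G_k$. That principle needs only that $G_k$ is connected and $B_k\ne\emptyset$, and both hold by the choice $v\in B$. No assumption on $B\ne V$ is needed: if $B=V$, take $\varphi=\varphi_B$.
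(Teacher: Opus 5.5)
Your proposal is correct and is essentially the paper's own argument: rerun the exhaustion of Theorem~\ref{thm:existence} with $f=0$, and on each finite $G_k$ use the maximum and minimum principles instead of Lemma~\ref{lem:apriori} to get the uniform bound $C_1\le\psi_k\le C_2$. Your small refinements, extending by $C_1$ rather than $0$ off $V_k$ and noting the case $B=V$ explicitly, are harmless and slightly tidier.
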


\begin{proof}
Repeat the proof of Theorem~\ref{thm:existence} with $f=0$, replacing Lemma~\ref{lem:apriori} by the maximum and minimum principles (Proposition~\ref{prop:max}) on each $G_k$, which give $C_1\le\varphi_k\le C_2$ on $V_k$.
\end{proof}

\subsection{Bounded data on graphs of bounded degree}

\begin{lemma}\label{lem:martingale}
Let $G$ be locally finite and connected, $B\ne\emptyset$, and let $h$ be bounded, harmonic on $V\setminus B$, with $h|_B=0$. Then $|h(v)|\le\bigl(\sup_V|h|\bigr)\,\P_v(\tau=\infty)$ for every $v$. Conversely, $h_\infty(v)=\P_v(\tau=\infty)$ is bounded, harmonic on $V\setminus B$, and vanishes on $B$.
\end{lemma}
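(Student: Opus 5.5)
The plan is to use the martingale/optional stopping argument for the first assertion, and first-step analysis for the converse.

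\emph{First assertion.} Let $M=\sup_V|h|<\infty$, fix $v$, and run simple random walk $X_t$ from $v$. Consider the stopped process $Y_t=h(X_{t\wedge\tau})$. Harmonicity of $h$ on $V\setminus B$ means $h(x)=d_x^{-1}\sum_{w\sim x}h(w)=\E[h(X_{t+1})\mid X_t=x]$ for $x\notin B$. After $\tau$ the process is frozen. Hence $Y_t$ is a martingale, bounded by $M$, and $h(v)=\E_v[Y_t]$ for every $t$.

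Split according to whether $\tau\le t$ or $\tau>t$. On $\{\tau\le t\}$ we have $Y_t=h(X_\tau)=0$, because $X_\tau\in B$ and $h|_B=0$. Therefore
\[
|h(v)|=\bigl|\E_v[h(X_t);\tau>t]\bigr|\le M\,\P_v(\tau>t).
\]
Letting $t\to\infty$, continuity of probability along the decreasing events $\{\tau>t\}$ gives $\P_v(\tau>t)\downarrow\P_v(\tau=\infty)$, which yields the bound. No convergence theorem for martingales is needed, only the identity $h(v)=\E_v[Y_t]$, which follows from the one-step relation by induction on $t$.

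\emph{Converse.} Set $h_\infty(v)=\P_v(\tau=\infty)$. It clearly satisfies $0\le h_\infty\le1$. For $v\in B$ we have $\tau=0$, so $h_\infty(v)=0$. For $v\notin B$ we have $\tau\ge1$, and the event $\{\tau=\infty\}$ is determined by the path from time $1$ on. The Markov property at time $1$ then gives
\[
h_\infty(v)=\sum_{w\sim v}d_v^{-1}\P_w(\tau=\infty),
\]
that is, $\Delta h_\infty(v)=0$. Local finiteness ensures that these sums are finite.

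I expect no real obstacle. The only point needing care is to stop the walk at $\tau$ (via $t\wedge\tau$) rather than use $h(X_t)$ itself, since $h$ need not be harmonic at points of $B$.
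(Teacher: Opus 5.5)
Your proposal is correct and takes the same route as the paper. The stopped process $h(X_{t\wedge\tau})$ is a bounded martingale, so $h(v)=\E_v[h(X_t);\tau>t]$, which gives the bound $\sup|h|\cdot\P_v(\tau>t)\downarrow\sup|h|\cdot\P_v(\tau=\infty)$; the converse then comes from the Markov property at time $1$ together with $\tau=0$ on $B$.
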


\begin{proof}
Harmonicity of $h$ at $x\notin B$ means $h(x)=\E[h(X_{t+1})\mid X_t=x]$, so $h(X_{t\wedge\tau})$ is a bounded martingale. Since $h(X_\tau)=0$ on $\{\tau\le t\}$, $h(v)=\E_v[h(X_{t\wedge\tau})]=\E_v[h(X_t);\tau>t]$, whence $|h(v)|\le\sup|h|\cdot\P_v(\tau>t)$; let $t\to\infty$. The converse follows from the Markov property at time $1$ for $v\notin B$, and $\tau=0$ for $v\in B$.
\end{proof}

\begin{theorem}\label{thm:rw}
Let $G$ be locally finite and connected and $B\neq\emptyset$.
\begin{enumerate}[label=(\roman*)]
\item The homogeneous problem ($f=0$, $\varphi_B=0$) has a nonzero bounded solution if and only if $\P_v(\tau=\infty)>0$ for some $v$. Hence two bounded solutions of \eqref{eq:dirichlet} always coincide if and only if $\P_v(\tau<\infty)=1$ for all $v$.
\item Suppose $d_v\le D$ for all $v$, $W=\Wid(G,B)<\infty$, and $f$, $\varphi_B$ are bounded. Then \eqref{eq:rwformula} defines the unique bounded solution of \eqref{eq:dirichlet}, and
\[
\sup_V|\varphi|\le\sup_B|\varphi_B|+W D^{W}\sup_{V\setminus B}|f|.
\]
\end{enumerate}
\end{theorem}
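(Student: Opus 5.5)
Part (i) is essentially Lemma~\ref{lem:martingale}. If $\P_v(\tau=\infty)>0$ for some $v$, then $h_\infty(v)=\P_v(\tau=\infty)$ is a nonzero bounded solution of the homogeneous problem. Conversely, let $h$ be a nonzero bounded homogeneous solution and suppose $\P_v(\tau=\infty)=0$ for all $v$. The lemma then gives $|h(v)|\le\sup|h|\cdot 0=0$ everywhere, a contradiction. The second sentence follows because the difference of two bounded solutions of \eqref{eq:dirichlet} is a bounded homogeneous solution. Conversely, if $\P_v(\tau=\infty)>0$ for some $v$, it remains to exhibit two distinct bounded solutions. This needs a bounded solution to exist at all, so I read ``always coincide'' as ``for all data admitting a bounded solution''. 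For $f=0$ and $\varphi_B=0$, both $0$ and $h_\infty$ work.

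For part (ii) I will first show that $\tau$ has uniformly geometric tails. From any $u\notin B$ there is a path of length $\ell\le W$ to $B$. The walk follows this path with probability $\prod d_{x_i}^{-1}\ge D^{-\ell}\ge D^{-W}$, so $\P_u(\tau\le W)\ge D^{-W}$ uniformly in $u$. The Markov property then gives $\P_v(\tau>kW)\le(1-D^{-W})^k$. In particular $\tau<\infty$ almost surely, and
\[
\E_v[\tau]=\sum_{t\ge0}\P_v(\tau>t)\le W\sum_{k\ge0}(1-D^{-W})^k=WD^W.
\]
Hence both terms of \eqref{eq:rwformula} converge absolutely. The first is bounded by $\sup|\varphi_B|$. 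The second is bounded by $\sup|f|\,\E_v[\tau]$, using $d_{X_t}\ge1$. This gives the stated bound.

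Next I verify that \eqref{eq:rwformula} solves the problem. On $B$, $\tau=0$, so $\varphi=\varphi_B$ there. For $v\notin B$ we have $\tau\ge1$, and conditioning on $X_1$ with the Markov property gives
\[
\varphi(v)=\frac{f(v)}{d_v}+\frac1{d_v}\sum_{w\sim v}\varphi(w),
\]
which is exactly $\Delta\varphi(v)=f(v)$. Exchanging the conditioning with the infinite sum is justified by dominated convergence, using $\E[\tau]<\infty$ and the boundedness of $f$. Uniqueness among bounded solutions then follows from part (i), since $\P_v(\tau<\infty)=1$ for every $v$.

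The main obstacle is the uniform hitting estimate. This is where both the degree bound and the width bound enter, and it is what makes $\E_v[\tau]$ finite and bounded uniformly in $v$. The rest is routine first-step analysis.
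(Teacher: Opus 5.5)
Your proposal is correct and follows the paper's proof essentially step for step. Part (i) goes through Lemma~\ref{lem:martingale} applied to differences of bounded solutions, and part (ii) through the uniform bound $\P_x(\tau\le W)\ge D^{-W}$, geometric tails giving $\E_v\tau\le WD^W$, and first-step analysis at $X_1$; your explicit witnesses $0$ and $h_\infty$ for non-uniqueness and the dominated-convergence remark only fill in details the paper leaves implicit.
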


\begin{proof}
(i) is Lemma~\ref{lem:martingale}: the difference of two bounded solutions is a bounded solution of the homogeneous problem.

(ii) From any vertex $x$, the walk follows a fixed shortest path to $B$, of length at most $W$, with probability at least $D^{-W}$, so $\P_x(\tau\le W)\ge D^{-W}$. By the Markov property $\P_v(\tau>jW)\le(1-D^{-W})^j$, hence $\tau<\infty$ almost surely and
\[
\E_v\tau=\sum_{t\ge0}\P_v(\tau>t)\le W\sum_{j\ge0}\P_v(\tau>jW)\le WD^W.
\]
The sum in \eqref{eq:rwformula} is bounded by $\tau\sup|f|$ since $d\ge1$, so both expectations converge absolutely and give the stated bound. For $v\in B$, $\tau=0$ and $\varphi(v)=\varphi_B(v)$. For $v\notin B$, $\tau\ge1$, and conditioning on $X_1$ gives $\varphi(v)=f(v)/d_v+d_v^{-1}\sum_{w\sim v}\varphi(w)$, which is $\Delta\varphi(v)=f(v)$. Uniqueness among bounded solutions follows from (i).
\end{proof}

\begin{remark}
Theorems~\ref{thm:existence} and~\ref{thm:rw} are complementary. The first needs no degree bound and gives a polynomial bound in $W$, but requires data decaying at infinity. The second requires bounded degree and gives an exponential constant, but allows bounded data and yields uniqueness among bounded functions. When both apply, the solution of Theorem~\ref{thm:existence} is bounded and therefore coincides with that of Theorem~\ref{thm:rw}.
\end{remark}

\section{Ends, parabolicity and uniqueness}\label{sec:ends}

\begin{definition}
An \emph{end} of $G$ relative to $B$ is an infinite connected component of $G[V\setminus B]$. An end $E$ is \emph{parabolic} if $\P_v(\tau<\infty)=1$ for $v\in E$, and \emph{non-parabolic} otherwise. Set
\[
\Hz=\{z\in\F:\Delta z=0\text{ on }V\setminus B,\ z|_B=0\}.
\]
\end{definition}

Distinct components of $G[V\setminus B]$ are not joined by edges, so the walk started in an end $E$ stays in $E$ until time $\tau$; parabolicity of $E$ depends only on $E$ and its edges to $B$. If $\varphi$ solves \eqref{eq:dirichlet}, the solution set is $\varphi+\Hz$.

\begin{remark}\label{rem:recurrence}
If $\partial E$ is finite, then $E$ is parabolic if and only if the graph $G[E\cup\partial E]$ is recurrent: a recurrent walk hits every finite set almost surely, while a transient walk started at a suitable vertex avoids a given finite set with positive probability \cite{Woess,LyonsPeres}. If $\partial E$ is infinite, $E$ may be parabolic even when $G[E]$ is transient. Parabolicity of $E$ is also equivalent to infinite effective resistance between $\partial E$ and infinity in $E\cup\partial E$ \cite{DoyleSnell,Soardi}.
\end{remark}

\begin{figure}[ht]
\centering
\begin{tikzpicture}[scale=1.0,
  vtx/.style={circle,fill=black,inner sep=1.6pt},
  bdy/.style={rectangle,fill=black,inner sep=2.6pt}]
  \node[bdy] (b1) at (0,0) {};
  \node[bdy] (b2) at (0,1.4) {};
  \foreach \i in {1,2,3,4} {
    \node[vtx] (l\i) at (-\i,0.7) {};
    \node[vtx] (r\i) at (\i,0.7) {};
  }
  \node[vtx] (c1) at (-0.55,2.4) {};
  \node[vtx] (c2) at (0.55,2.4) {};
  \draw (b1)--(b2);
  \draw (b1)--(l1)--(b2); \draw (b1)--(r1)--(b2);
  \draw (l1)--(l2)--(l3)--(l4); \draw (r1)--(r2)--(r3)--(r4);
  \draw[dashed] (l4)--(-5,0.7); \draw[dashed] (r4)--(5,0.7);
  \draw (b2)--(c1)--(c2)--(b2);
  \node at (-2.5,0.25) {$E_1$};
  \node at (2.5,0.25) {$E_2$};
  \node at (0,2.85) {$C$};
  \node at (0.45,-0.05) {$B$};
\end{tikzpicture}
\caption{The boundary $B$ (squares) splits $G$ into a finite component $C$ and two ends $E_1,E_2$. Theorem~\ref{thm:ends} gives $z_1\in\Hz$ positive on $E_1$ and zero elsewhere, and $z_2$ likewise for $E_2$. Both ends here are rays, hence parabolic, so by Theorem~\ref{thm:parabolic} $z_1$ and $z_2$ are unbounded and the bounded solution is unique.}
\label{fig:ends}
\end{figure}

\begin{theorem}\label{thm:noends}
Let $G$ be locally finite and connected, and $B\ne\emptyset$. If $G$ has no ends relative to $B$, then for all $f$ and $\varphi_B$ the Dirichlet problem \eqref{eq:dirichlet} has exactly one solution.
\end{theorem}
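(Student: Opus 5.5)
The plan is to reduce the problem to finite pieces. If $G$ has no ends relative to $B$, then every connected component $C$ of $G[V\setminus B]$ is finite. No edges join distinct components. Hence the equations $\Delta\varphi(v)=f(v)$ for $v\in C$ involve only the values of $\varphi$ on $C\cup\partial C$, and $\partial C\subseteq B$ is where $\varphi$ is prescribed. The problem therefore decouples into independent finite Dirichlet problems, one for each component $C$, and these are solved and glued together.

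Concretely, fix a component $C$ and consider the finite graph $H_C=G[C\cup\partial C]$. Here $\partial C$ is finite, since $C$ is finite and $G$ is locally finite. $H_C$ is connected because $C$ is connected and every vertex of $\partial C$ is adjacent to $C$. Also $\partial C\neq\emptyset$, because $G$ is connected and $B\neq\emptyset$. One subtlety needs care: the degree of $v\in C$ in $H_C$ must equal its degree in $G$. This holds because every neighbour of $v$ lies either in $V\setminus B$, hence in $C$, or in $B$, hence in $\partial C$.

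I take as boundary of $H_C$ the set $\partial C$. I deliberately discard the edges between vertices of $\partial C$ that $G[\,\cdot\,]$ would include. This changes only degrees of boundary vertices, which do not enter the equations on $C$, so it is harmless; I would either remove those edges or simply note that they are irrelevant. With data $f|_C$ and $\varphi_B|_{\partial C}$, Proposition~\ref{prop:finite}(i) gives a unique solution $\varphi_C$ on $C\cup\partial C$, which satisfies $\Delta_{H_C}\varphi_C=\Delta_G\varphi_C=f$ on $C$.

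For existence, define $\varphi=\varphi_B$ on $B$ and $\varphi=\varphi_C$ on each component $C$. This is consistent, since each $\varphi_C$ agrees with $\varphi_B$ on $\partial C$. Then $\Delta\varphi(v)$ for $v\in C$ depends only on values in $C\cup\partial C$, so it equals $f(v)$.

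For uniqueness, the difference $z$ of two solutions lies in $\Hz$. Its restriction to each $C\cup\partial C$ solves the homogeneous finite problem, so it vanishes by Proposition~\ref{prop:finite}(i); one could alternatively use the maximum principle, Proposition~\ref{prop:max}. Hence $z=0$.

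I expect no real obstacle. The only points needing care are the bookkeeping that degrees and neighbourhoods of interior vertices are the same in $H_C$ as in $G$, and the connectivity and nonemptiness of $\partial C$ needed to invoke Proposition~\ref{prop:finite}(i). No growth conditions on $f$ or $\varphi_B$ are required, since each piece is finite.
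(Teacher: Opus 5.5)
Your proposal is correct and follows essentially the same route as the paper: every component $C$ of $G[V\setminus B]$ is finite, the equations at vertices of $C$ involve only $C\cup\partial C$ with unchanged degrees, and Proposition~\ref{prop:finite}(i) is applied independently on each $G[C\cup\partial C]$ with boundary $\partial C$. Your extra checks (finiteness of $\partial C$ from local finiteness, connectivity of $H_C$, and the irrelevance of edges inside $\partial C$) are bookkeeping that the paper leaves implicit.
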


\begin{proof}
Every component $C$ of $G[V\setminus B]$ is finite, and $\partial C\ne\emptyset$ because $G$ is connected and $B\ne\emptyset$. Every neighbour of a vertex of $C$ lies in $C\cup\partial C$, so the equations of \eqref{eq:dirichlet} at vertices of $C$ involve only values on $C\cup\partial C$, and they coincide with the Dirichlet problem on the finite connected graph $G[C\cup\partial C]$ with boundary $\partial C$ (degrees of vertices of $C$ are unchanged). By Proposition~\ref{prop:finite}(i) this problem has exactly one solution for each $C$, and the problems for different $C$ are independent.
\end{proof}

If $B=\emptyset$ and $G$ is infinite, $G$ itself is an end; if $B=\emptyset$ and $G$ is finite, Proposition~\ref{prop:finite} applies.

\begin{theorem}\label{thm:ends}
Let $G$ be locally finite and connected with $l\in[0,\infty]$ ends relative to $B$. For each end $E$ there is $z_E\in\Hz$ with $z_E>0$ on $E$ and $z_E=0$ off $E$. In particular $\dim\Hz\ge l$, and if \eqref{eq:dirichlet} has a solution, its solution set has dimension at least $l$.
\end{theorem}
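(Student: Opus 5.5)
Since distinct components of $G[V\setminus B]$ are not joined by edges, it suffices to construct, for a single end $E$, a function $z$ on $E\cup\partial E$ with $z=0$ on $\partial E$, $z>0$ on $E$ and $\Delta z=0$ at each vertex of $E$ (computed with the degrees of $G$). Extending $z$ by zero gives $z_E\in\Hz$. At a vertex $u\notin E\cup B$ every neighbour lies in $B$ or in $u$'s own component, and $z_E$ vanishes there, so $\Delta z_E(u)=0$. The functions $z_E$ for distinct ends have disjoint supports, so they are linearly independent; this gives $\dim\Hz\ge l$ (for $l=\infty$, every finite subfamily is independent). If $\varphi$ solves \eqref{eq:dirichlet}, the solution set is $\varphi+\Hz$, which gives the last assertion.

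For the construction I will use an exhaustion together with a normalization that prevents the limit from being zero. Fix $v_0\in E$. Since $E$ is infinite, connected and locally finite, König's lemma gives a geodesic ray $v_0=x_0,x_1,\dots$ in $G[E]$. Let $E_k=\{u\in E:\dist_E(v_0,u)\le k\}$, and for $k\ge1$ solve the finite Dirichlet problem on the graph $H_k$ induced by $E_k\cup\partial E\cup\{x_{k+1}\}$, where some care is needed, or better on $G[E_{k+1}\cup\partial E]$. The boundary is $\partial E\cup S_{k+1}$, with $S_{k+1}$ the sphere of radius $k+1$; the boundary values are $0$ on $\partial E$ and $1$ on $S_{k+1}$, and we require harmonicity on $E_k$. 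Vertices of $E_k$ keep their full $G$-degree because all their neighbours lie in $E_{k+1}\cup\partial E$. If $H_k$ is not connected, I apply Proposition~\ref{prop:finite}(i) to each component meeting $E_k$; each such component contains a boundary vertex. This yields $h_k$ with $0\le h_k\le1$ by Proposition~\ref{prop:max}.

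The main obstacle is positivity and nondegeneracy. First, $h_k>0$ on $E_k$. By the maximum-principle argument applied to minima, if $h_k=0$ at an interior vertex then $h_k$ vanishes on its whole interior component. But $E_k$ is connected (balls around $v_0$ in $G[E]$ are connected) and it touches $S_{k+1}$, where $h_k=1$, which is a contradiction. Now normalize by setting $z_k=h_k/h_k(v_0)$. To get uniform local bounds I use a Harnack-type inequality. If $h\ge0$ is harmonic at $u$ and $w\sim u$, then $h(w)\le d_u h(u)$. Iterating along paths inside $E$ gives $z_k(u)\le\prod d_{y_i}$ along a path from $v_0$ to $u$, a bound independent of $k$. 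A diagonal argument then gives a subsequence with $z_k\to z$ pointwise on $E\cup\partial E$, where $z(v_0)=1$, $z\ge0$, $z=0$ on $\partial E$, and $z$ is harmonic on $E$ because each harmonicity equation is a finite linear relation that holds for all large $k$. Finally, $z>0$ on $E$: if $z(u)=0$ for some $u\in E$, the minimum principle propagates $z=0$ through the connected set $E$, which contradicts $z(v_0)=1$.

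It remains to check that $\partial E\neq\emptyset$. This holds because $G$ is connected and $B\neq\emptyset$ whenever an end exists with $B\ne V$. The one exception is $B=\emptyset$, where $E=G$ and the statement would demand $z_E>0$ harmonic on $G$; there the constant function $1$ works. Nonemptiness of $\partial E$ is not actually needed in the construction above, since the normalization already forces $z\neq0$.
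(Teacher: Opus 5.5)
Your proof is essentially the paper's: extend by zero from $E\cup\partial E$, exhaust with boundary values $0$ on $\partial E$ and $1$ on the outer sphere, normalize at a base point, bound $z_k(x)\le\prod_i d_{x_i}$ along a path, pass to a diagonal limit, and use the minimum principle for strict positivity. The only real difference is the exhaustion. You center at $v_0\in E$ and use the distance of $G[E]$, so $E_k$ is connected and adjacent to $S_{k+1}$, and $h_k(v_0)>0$ is immediate. The paper centers at $v\in\partial E$ and uses $\rho=\dist_\Gamma$ on $\Gamma=G[E\cup\partial E]$. There $I_k$ may be disconnected, because paths can pass through $\partial E$, so the paper needs a separate argument producing a path from $w_1$ to $S_k$ inside $I_k$.

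The price of your choice is one slip you should repair. The graph $G[E_{k+1}\cup\partial E]$ is not finite in general, since $\partial E$ can be infinite. In Example~\ref{ex:table}(b), $\partial E$ is the entire bottom row, and the component containing $E_k$ is infinite. So Propositions~\ref{prop:finite}(i) and~\ref{prop:max} do not apply to it as stated.

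The fix is to keep only the vertices of $\partial E$ adjacent to $E_k$. There are finitely many of them. Together with $E_{k+1}$ they induce a finite connected graph that contains every $G$-neighbour of every vertex of $E_k$. The rest of your argument then goes through verbatim, setting $z_k=0$ at the remaining vertices. The paper's balls in $\Gamma$ contain only finitely many vertices of $\partial E$ automatically, so it avoids this issue.

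The K\"onig ray and the variant with $\{x_{k+1}\}$ are never used and can be dropped.
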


\begin{proof}
If $B=\emptyset$ and $G$ is infinite, then $l=1$ and the constant $1$ works. Assume $B\ne\emptyset$ and let $E$ be an end. Since the $z_E$ for distinct ends have disjoint supports, they are linearly independent, which gives the dimension bound (for $l=\infty$, $\Hz$ is infinite-dimensional).

\emph{Reduction.} Let $\Gamma=G[E\cup\partial E]$. Vertices of $E$ have the same neighbours, hence the same degree, in $\Gamma$ as in $G$. Suppose $z$ on $V(\Gamma)$ is harmonic on $E$, vanishes on $\partial E$ and is positive on $E$; extend it by zero. At a vertex of $E$ it is harmonic in $G$. At a vertex $x$ of $V\setminus B$ outside $E$, all neighbours of $x$ lie in its own component or in $B$, where the extension is zero, so $\Delta z(x)=0$. Hence the extension lies in $\Hz$. It remains to construct $z$ on $\Gamma$.

\emph{Construction.} Fix $v\in\partial E$ and $w_1\in E$ with $w_1\sim v$, and let $\rho(x)=\dist_\Gamma(v,x)$. For $k\ge2$ let $\Gamma_k$ be the subgraph of $\Gamma$ induced by $\{\rho\le k\}$, which is finite and connected, and set
\[
I_k=\{x\in E:\rho(x)\le k-1\},\qquad S_k=\{x\in E:\rho(x)=k\},\qquad B_k=(\partial E\cap\{\rho\le k\})\cup S_k.
\]
Neighbours of $x\in I_k$ satisfy $\rho\le k$, so they lie in $\Gamma_k$, and $x$ has the same degree there.

\emph{A path from $w_1$ to $S_k$ inside $I_k$.} Since $E$ is infinite and $\{\rho\le k\}$ is finite, there is $x\in E$ with $\rho(x)>k$, and a path from $w_1$ to $x$ inside $E$ (which is connected). Along it $\rho$ changes by at most $1$ per step and starts at $\rho(w_1)=1$, so the first vertex with $\rho=k$ lies in $S_k$ and all earlier vertices lie in $I_k$. (This replaces the ray argument of the earlier version, which mixed distances in $\Gamma$ and in $G$.)

Let $\varphi_k$ be the solution on $\Gamma_k$, with boundary $B_k$, of
\[
\Delta\varphi_k=0\text{ on }I_k,\qquad \varphi_k=0\text{ on }\partial E\cap\{\rho\le k\},\qquad \varphi_k=1\text{ on }S_k,
\]
which exists and is unique by Proposition~\ref{prop:finite}(i). By Proposition~\ref{prop:max}, $0\le\varphi_k\le1$. If $\varphi_k(w_1)=0$, then $w_1\in I_k$ is an interior minimum, so all its neighbours vanish; propagating along the path above, we reach a vertex of $S_k$ with value $0$, a contradiction. So $\varphi_k(w_1)>0$, and
\[
z_k=\varphi_k/\varphi_k(w_1)\ \text{on }\{\rho\le k\},\qquad z_k=0\ \text{elsewhere on }V(\Gamma),
\]
satisfies $z_k\ge0$, $z_k(w_1)=1$, $z_k=0$ on $\partial E$, and $\Delta z_k=0$ on $I_k$.

\emph{Harnack-type bound.} Fix $x\in E$ and a path $w_1=x_0,x_1,\dots,x_m=x$ in $E$. For $k\ge m+2$ all $x_i$ with $i<m$ satisfy $\rho(x_i)\le i+1\le k-1$, so lie in $I_k$. Harmonicity at $x_i$ and $z_k\ge0$ give $z_k(x_{i+1})\le\sum_{u\sim x_i}z_k(u)=d_{x_i}z_k(x_i)$, hence
\[
z_k(x)\le\prod_{i=0}^{m-1}d_{x_i}\qquad(k\ge m+2).
\]

\emph{Limit.} By a diagonal argument over the countable set $V(\Gamma)$, a subsequence $z_{k_i}$ converges pointwise to some $z\ge0$ with $z(w_1)=1$ and $z|_{\partial E}=0$. Each $x\in E$ lies in $I_k$ for large $k$, and $\Delta z_k(x)=0$ is a finite linear relation, so $\Delta z(x)=0$. Finally, if $z(x)=0$ for some $x\in E$, the minimum principle propagates $z=0$ through the connected set $E$ to $w_1$, a contradiction; so $z>0$ on $E$.
\end{proof}

\begin{theorem}\label{thm:parabolic}
Let $G$ be locally finite and connected, $B\ne\emptyset$, and let $E$ be an end relative to $B$.
\begin{enumerate}[label=(\roman*)]
\item If $E$ is non-parabolic, the function $h_E(x)=\P_x(\tau=\infty)$ for $x\in E$, $h_E=0$ off $E$, is a bounded element of $\Hz$, positive on $E$.
\item If $E$ is parabolic, every bounded element of $\Hz$ vanishes on $E$. In particular every $z_E$ as in Theorem~\ref{thm:ends} is unbounded on $E$: there are $x_n\in E$, eventually leaving every finite set, with $z_E(x_n)\to\infty$.
\item Bounded solutions of \eqref{eq:dirichlet} are unique for all data if and only if every end is parabolic.
\end{enumerate}
\end{theorem}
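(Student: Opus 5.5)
Part (i) follows from Lemma~\ref{lem:martingale}, part (ii) from a localized version of the martingale argument, and part (iii) from comparing the dichotomy with Theorem~\ref{thm:rw}(i).

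\emph{Part (i).} By Lemma~\ref{lem:martingale}, $h_\infty(v)=\P_v(\tau=\infty)$ is bounded by $1$, harmonic on $V\setminus B$, and zero on $B$. Set $h_E=h_\infty\mathbf 1_E$. At vertices of $E$, harmonicity is inherited from $h_\infty$, because all neighbours of $x\in E$ lie in $E\cup\partial E$ and $h_\infty$ vanishes on $\partial E\subseteq B$. At a vertex $x\in V\setminus B$ outside $E$, every neighbour lies in the component of $x$ or in $B$, so $h_E$ vanishes at $x$ and at all its neighbours, and $\Delta h_E(x)=0$. This is the same reduction as in Theorem~\ref{thm:ends}.

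For positivity, non-parabolicity gives some $v_0\in E$ with $h_E(v_0)>0$. Since $h_E\ge0$ is harmonic on the connected set $E$, the minimum-propagation argument applies. If $h_E(x)=0$ for some $x\in E$, then all neighbours of $x$ vanish, and spreading through $E$ forces $h_E(v_0)=0$. So $h_E>0$ on $E$.

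\emph{Part (ii).} Let $h\in\Hz$ be bounded. Lemma~\ref{lem:martingale} gives $|h(v)|\le\sup|h|\,\P_v(\tau=\infty)$, and this is $0$ for $v\in E$ since $E$ is parabolic. So $h$ vanishes on $E$.

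Now suppose $z_E$ were bounded on $E$. Then $z_E$ is bounded globally, since it is zero off $E$, so $z_E$ is a bounded element of $\Hz$ and must vanish on $E$. This contradicts $z_E>0$ on $E$. Hence $\sup_E z_E=\infty$, and we can pick $x_n\in E$ with $z_E(x_n)\to\infty$. Such a sequence eventually leaves every finite set: on a finite set $F$, $z_E$ is bounded by $\max_F z_E$, so only finitely many $x_n$ can lie in $F$.

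\emph{Part (iii).} By Theorem~\ref{thm:rw}(i), bounded solutions are unique for all data if and only if $\P_v(\tau<\infty)=1$ for all $v$. This condition holds automatically for $v\in B$. It also holds for $v$ in a finite component $C$ of $G[V\setminus B]$: the walk started in $C$ stays in the finite set $C$ until $\tau$. Without hitting $\partial C\neq\emptyset$, it would remain forever in a finite set with a uniformly positive chance of exiting within $|C|$ steps, and Borel--Cantelli rules this out. For vertices in ends, the condition is exactly parabolicity. So uniqueness holds if and only if every end is parabolic. Alternatively, the direction "non-parabolic end $\Rightarrow$ non-uniqueness" follows directly from (i), since $h_E$ can be added to any bounded solution.

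\emph{Main obstacle.} The one delicate point is the finite-component hitting argument in (iii), which needs the geometric bound $\P_x(\tau\le|C|)\ge\min_{y\in C} d_y^{-|C|}>0$ together with the Markov property, as in the proof of Theorem~\ref{thm:rw}(ii). Everything else is a direct application of earlier results.
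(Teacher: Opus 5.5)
Your proposal is correct and follows the paper's proof essentially step for step: (i) uses Lemma~\ref{lem:martingale}, the reduction from Theorem~\ref{thm:ends} and minimum propagation through $E$; (ii) uses the bound $|h|\le\sup|h|\,\P_x(\tau=\infty)$; and (iii) uses Theorem~\ref{thm:rw}(i) together with the geometric hitting bound on finite components. (Borel--Cantelli is not quite the right tool in (iii), since the events are dependent, but the Markov-property iteration $\P_x(\tau>j|C|)\le(1-\varepsilon)^j$ you give in your last paragraph is exactly the paper's argument.)
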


\begin{proof}
(i) By Lemma~\ref{lem:martingale}, $h_\infty(x)=\P_x(\tau=\infty)$ is harmonic on $V\setminus B$ and zero on $B$. The function $h_E$ agrees with $h_\infty$ on $E\cup B$ and is zero elsewhere; it is harmonic at vertices of $E$ since their neighbours lie in $E\cup B$, and at other vertices of $V\setminus B$ by the argument in the reduction step of Theorem~\ref{thm:ends}. It is nonzero because $E$ is non-parabolic, and positive on $E$ by the minimum principle on the connected set $E$.

(ii) By Lemma~\ref{lem:martingale}, a bounded $h\in\Hz$ satisfies $|h(x)|\le\sup|h|\cdot\P_x(\tau=\infty)=0$ for $x\in E$. Since $z_E>0$ on $E$, it cannot be bounded; as $z_E$ is finite on every finite set, a sequence with $z_E(x_n)\to\infty$ eventually leaves every finite set.

(iii) By Theorem~\ref{thm:rw}(i), uniqueness holds if and only if $\P_x(\tau<\infty)=1$ for all $x$. For $x$ in a finite component $C$ of $G[V\setminus B]$ this always holds: from any vertex of $C$ the walk reaches $\partial C$ within $|C|$ steps with probability at least some $\varepsilon>0$, so $\P_x(\tau>j|C|)\le(1-\varepsilon)^j$. For $x$ in an end, it holds exactly when the end is parabolic.
\end{proof}

Theorem~\ref{thm:parabolic}(ii) is the discrete counterpart of the Li--Tam construction on parabolic ends \cite{LiTam}, and (i) of the bounded harmonic functions on non-parabolic ends.

\begin{example}\label{ex:table}
The lower bound $\dim\Hz\ge l$ can be attained or far from sharp.
\begin{enumerate}[label=(\alph*)]
\item $G=\Z$, $B=\{0\}$. Two ends, both parabolic. $\Hz=\{\varphi: \varphi(i)=ai\ (i>0),\ \varphi(i)=bi\ (i<0)\}$ has dimension $2=l$, and contains no nonzero bounded function.
\item The ladder $\Z\times\{0,1\}$ with $B=\Z\times\{0\}$. One end, the top row, which is parabolic (from the top row the walk steps down with probability $1/3$ at each step). On the top row $3\varphi(n)=\varphi(n-1)+\varphi(n+1)$, so $\Hz=\{ar^n+br^{-n}\}$ with $r=(3+\sqrt5)/2$, of dimension $2>l=1$. Here $W=1$ and $D=3$, so Theorem~\ref{thm:rw} applies, consistent with the absence of bounded elements.
\item $G=\Z^2$, $B=\{0\}$. One parabolic end, yet $\dim\Hz=\infty$. Indeed, the equation $\Delta\varphi=0$ at $(x,y)$ can be solved for $\varphi(x+1,y)$, so a function harmonic on all of $\Z^2$ may be prescribed arbitrarily on the two columns $x=0$ and $x=1$, and is then uniquely determined. Imposing $\varphi(0,0)=0$ leaves an infinite-dimensional space inside $\Hz$ (for example $x$, $y$, $xy$, $x^2-y^2$).
\item $G=\Z^3$, $B=\{0\}$. One non-parabolic end; $\dim\Hz=\infty$ by the argument of (c) applied to two parallel planes, and $\Hz$ contains the bounded function $\P_v(\tau=\infty)=1-\mathcal G(v,0)/\mathcal G(0,0)$, where $\mathcal G$ is the Green function.
\item The rooted binary tree (root of degree $2$, all other vertices of degree $3$), $B=\{\text{root}\}$. Two non-parabolic ends. At a vertex $u$ with parent $p$ and children $c,c'$, harmonicity reads $3\varphi(u)=\varphi(p)+\varphi(c)+\varphi(c')$; choosing $\varphi(c)$ freely at each vertex and solving for $\varphi(c')$ shows $\dim\Hz=\infty$.
\end{enumerate}
\end{example}

Examples (b) and (c) show that $\dim\Hz$ is governed by the geometry of each end at infinity rather than by the number of ends. The natural invariants are the Martin and Poisson boundaries \cite{Woess}; prescribing data at the ends, as in \cite{Perkins}, restores uniqueness in the non-parabolic case.

\section{Open questions}\label{sec:questions}

\begin{question}
Theorem~\ref{thm:rw}(ii) uses a degree bound. If $\Wid(G,B)<\infty$ but degrees are unbounded, must every end be parabolic, so that bounded solutions are unique?
\end{question}

\begin{question}
For which graphs with finitely many ends is $\dim\Hz$ finite? Examples~\ref{ex:table}(a)--(b) give finite dimension and (c)--(e) infinite dimension. Is finiteness equivalent to each end having linear volume growth?
\end{question}

\begin{question}
Under growth restrictions, spaces of harmonic functions become finite-dimensional on manifolds and groups of polynomial growth \cite{ColdingMinicozzi,Kleiner}. What is the dimension of $\{z\in\Hz:|z(x)|=O(\dist(x,B)^k)\}$ in terms of the ends of $G$?
\end{question}

\begin{question}
Describe the finite-energy solutions of \eqref{eq:dirichlet} on infinite graphs, and compare them with the bounded solutions of Theorem~\ref{thm:rw} and the solutions of Theorem~\ref{thm:existence}.
\end{question}

\section*{Acknowledgement}
This revised version is prepared by an AI assistant (Claude).

\end{document}